\renewcommand{\mathcal}[1]{\mathscr{#1}}
\newcommand{\ta}{\oplus}
\newcommand{\tp}{\odot}
\newcommand{\td}{\oslash}
\renewcommand{\phi}{\varphi}
\newcommand{\bb}[1]{\mathbb{#1}}
\newcommand\restr[2]{{
  \left.\kern-\nulldelimiterspace 
  #1 
  \vphantom{\big|} 
  \right|_{#2} 
}}
\title{Tropical Combinatorial Nullstellensatz\\ and Sparse Polynomials\footnote{An extended abstract of a preliminary version~\cite{GrigorievP17} appeared in the proceedings of the 21st International Symposium on Fundamentals of Computation Theory (FCT 2017).\newline
The results of Sections 4 and 6 were obtained by the first author  at MCCME and supported by the Russian Science Foundation (project 16-11-10075). The results of Sections 3 and 5 were obtained by the second author and were supported by grant MK-5379.2018.1, by the
Russian Academic Excellence Project `5-100' and by RFBR grant 17-51-10005-KO\_a. }}
\author{Dima Grigoriev \and Vladimir V. Podolskii}
\author{Dima Grigoriev$^1$, Vladimir V. Podolskii$^{2,3}$\\[3pt]
$^1$\small CNRS, Math\'ematiques, Universit\'e de Lille, Villeneuve d'Ascq, 59655, France\\
\small \href{mailto:Dmitry.Grigoryev@math.univ-lille1.fr}{Dmitry.Grigoryev@math.univ-lille1.fr}\\
$^2$ \small Steklov Mathematical Institute, Moscow, Russia\\
$^3$ \small National Research University Higher School of Economics, Moscow, Russia\\
 \small   \href{mailto:podolskii@mi.ras.ru}{podolskii@mi.ras.ru}
}
\date{}
\begin{document}

\maketitle

\begin{abstract}
Tropical algebra emerges in many fields of mathematics such as algebraic geometry, mathematical physics and combinatorial optimization. In part, its importance is related to the fact that it makes various parameters of mathematical objects computationally accessible. Tropical polynomials play a fundamental role in this, especially for the case of algebraic geometry. On the other hand, many algebraic questions behind tropical polynomials remain open.
In this paper we address four basic questions on tropical polynomials closely related to their computational properties:
\begin{enumerate}
\item Given a polynomial with a certain support (set of monomials) and a (finite) set of inputs, when is it possible for the polynomial to vanish on all these inputs?
\item A more precise question, given a polynomial with a certain support and a (finite) set of inputs, how many roots can this polynomial have on this set of inputs?
\item Given an integer $k$, for which $s$ there is a set of $s$ inputs such that any non-zero polynomial with at most $k$ monomials has a non-root among these inputs?
\item How many integer roots can have a one variable polynomial given by a tropical algebraic circuit?
\end{enumerate}
In the classical algebra well-known results in the direction of these questions are Combinatorial Nullstellensatz due to N.~Alon, J.~Schwartz - R.~Zippel Lemma and Universal Testing Set for sparse polynomials respectively. The classical analog of the last question is known as $\tau$-conjecture due to M.~Shub - S.~Smale.
In this paper we provide results on these four questions for tropical polynomials.

\end{abstract}


\tableofcontents

\section{Introduction}

A \emph{max-plus} or a \emph{tropical semiring} is defined by a set $\bb{K}$, which can be $\mathbb{R}$ or $\mathbb{Q}$
endowed with two operations, the \emph{tropical addition} $\ta$ and the \emph{tropical multiplication} $\tp$, defined in
the following way:
$$
x \ta y = \max\left(x,y\right), \ \ \ \ x \tp y = x + y.
$$

Tropical polynomials are a natural analog of classical polynomials.
In classical terms a tropical polynomial is an expression of the form $f(\vec{x}) = \max_i M_i(\vec{x})$,
where each $M_i(\vec{x})$ is a linear polynomial (a tropical monomial) in variables $\vec{x} = (x_1, \ldots, x_n)$,
and all the coefficients of all $M_i$'s are nonnegative integers except for constant terms that can be any elements of $\bb{K}$ (the constant term corresponds to a coefficient of the tropical monomial and other coefficients correspond to the powers of variables in the tropical monomial). 

The degree of a tropical monomial $M$ is the sum of its coefficients (except the constant term) and
the degree of a tropical polynomial $f$ denoted by $\deg(f)$ is the maximal degree of its monomials.
A point $\vec{a} \in \bb{K}^n$ is a root of the polynomial $f$ if the maximum $\max_i\{M_i(\vec{a})\}$
is attained on at least two different monomials $M_i$.
The detailed definitions on the basics of max-plus algebra are provided in Preliminaries.

Tropical polynomials have appeared in various areas of mathematics and found many applications (see, for example,~\cite{IMS2009tropical,MS2015tropical,sturmfels02equations,Mikhalkin2004survey,RGST05first_steps,HuberS95polyhedral,Vorobyev67}). An early source of the tropical approach was the Newton's method for solving
algebraic equations in Newton-Puiseux series~\cite{sturmfels02equations}.
An important advantage of tropical algebra is that it makes some properties of classical mathematical objects computationally accessible~\cite{theobald06frontiers,IMS2009tropical,MS2015tropical,sturmfels02equations}:
on one hand tropical analogs reflect certain properties of classical objects and on the other hand tropical objects have much more simple and discrete structure and thus are more accessible to algorithms.
One of the main goals of max-plus mathematics is to build a theory of tropical polynomials which would help to work with
them and would possibly lead to new results in related areas. Computational applications, on the other hand, make
it important to keep the theory maximally computationally efficient.

The case studied best so far is the one of tropical linear polynomials and systems of tropical linear polynomials.
For them an analog of a large part of the classical theory of linear polynomials was established.
This includes studies of tropical analogs of the rank of a matrix and the independence of vectors~\cite{DSS05rank,IzhakianR2009rank,AkianGG09rank},
an analog of the determinant of a matrix and its properties~\cite{AkianGG09rank,DSS05rank,Grigoriev13complexity}, an analog of Gauss triangular form~\cite{Grigoriev13complexity}. Also the solvability problem for tropical linear systems was studied from the complexity point of view.
Interestingly, this problem turns out to be polynomially equivalent to the mean payoff games problem~\cite{akian12mean_payoff,GP13Complexity} which received considerable attention in computational complexity theory.

For tropical polynomials of arbitrary degree less is known. In~\cite{IzhakianS07} the radical of a tropical
ideal was explicitly described.
In~\cite{RGST05first_steps,ST10SIAM} a tropical version of the Bezout theorem was proved for tropical polynomial systems for the case when the number of polynomials in the system is equal to the number of variables.
In~\cite{DavydowG16} the Bezout bound was extended to systems with an arbitrary
number of polynomials.
In~\cite{GrigorievP18} the tropical analog of Hilbert's Nullstellensatz was established.
In~\cite{Bihan16} a bound on the number of nondegenerate roots of a system of sparse tropical polynomials was given.
In~\cite{theobald06frontiers} it was shown that the solvability problem for tropical polynomial systems is $\NP$-complete.

\paragraph{Our results.} In this paper we address several basic questions for tropical polynomials.

The first question we address is given a set $S$ of points in $\bb{R}^n$ and a set of monomials of $n$ variables, is there a tropical polynomial with these monomials that has roots in all the points of the set. In the classical case a famous result in this direction with numerous applications in Theoretical Computer Science and in Number Theory is the Combinatorial Nullstellensatz~\cite{Alon99}. Very roughly, it states that the set of monomials of a polynomial can be substantially larger than the set $S$ of the points and at the same time the polynomial is still non-zero on at least one of the points in $S$. In the tropical case we show that this is not the case: if the number of monomials is larger than the number of points, there is always a polynomial with roots in all the points. We establish the general criterion for existence of a polynomial on a given set of monomials with roots in all the points of a given set (Theorem~\ref{thm:comb_general} below). From this criterion we deduce that if the number of points is equal to the number of monomials, and the set of points and the set of monomials are structured in the same way (more specifically, these sets augmented with coordinate-wise order are isomorphic), then there is no polynomial with roots in all the points (Theorem~\ref{thm:combinatorial_null}). We note that the last statement for the classical case is an open question~\cite{Risler90}.

There is one more notable difference of our version of Combinatorial Nullstellensatz compared to the classical case. In the classical version an important technical assumption in the theorem is that a certain large degree monomial occurs in the polynomial. Without this assumption the classical theorem is not true: there might be a polynomial with zeros in all points of a certain set and with a small number of monomials. In the tropical case on the other hand once the polynomial has roots in some set of points, we can add any monomials to this polynomial without reducing the number of zeros.

The second question is given a finite set $T \subseteq \bb{R}$ how many roots can a tropical polynomial of $n$ variables and degree $d$  have in the set $T^n$? In the classical case the well-known Schwartz-Zippel lemma~\cite{Zippel79,Schwartz80} states that the maximal number of roots is $d|T|^{n-1}$. We show that in the tropical case the maximal possible number of roots is $|T|^n - (|T|-d)^n$ (Theorem~\ref{thm:schwartz-zippel}).
We note that this result can be viewed as a generalization and improvement of isolation lemma of Mulmuley, Vazirani, and Vazirani~\cite{Mulmuley87,CRS95isolation,Klivans01,Ta-Shma15}. In particular, we prove a more precise version of a technical result in~\cite[Lemma~4]{Klivans01}. The paper~\cite{Ta-Shma15} proves the same upper bound as in our result for the special case of $d=1$.

The third question is related to a \emph{universal testing set} for tropical polynomials of $n$ variables with at most $k$ monomials. A universal testing set is a set of points $S \subseteq \bb{K}^n$ such that any nontrivial polynomial with at most $k$ monomials has a non-root in one of the points of $S$. The problem is to find a minimal size of a universal testing set for given $n$ and $k$. In the classical case this problem is tightly related to the problem of interpolating a polynomial with a certain number of monomials (with a priori unknown support) given its values on some universal set of inputs. The classical problem was studied in~\cite{GrigorievK87,Ben-OrT88,KaltofenY89,GrigorievKS91} and the minimal size of the universal testing set for the classical case turns out to be equal to $k$, in particular, independent from $n$ (while for the interpolation problem the size is $2k$). In the tropical case it turns out that the answer depends on which tropical semiring $\bb{K}$ is considered: for $\bb{K}=\bb{R}$ we show that as in the classical case the minimal size of a universal testing set is equal to $k$ (Theorem~\ref{thm:univ_set_r}). For $\bb{K}=\bb{Q}$ it turns out that the minimal size of a universal testing set is substantially larger. We show that its size is $\Theta(kn)$ (Theorems~\ref{thm:upper bound on k}~and~\ref{thm:univ_set_nonconstructive}; the constants in $\Theta$ do not depend on $k$ and $n$)\footnote{For two non-negative real valued functions $f(k,n)$ and $g(n,k)$ the notation $f = \Theta(g)$ means that there are positive constants $c$ and $C$ such that $cf(k,n) \leq g(k,n) \leq Cf(k,n)$ for all $k$ and $n$.}. For $n=2$ we find the precise size of a minimal universal testing set $s = 2k-1$ (Theorems~\ref{thm:univ_dim_2_upper}~and~\ref{thm:univ_set_dim_2}). For greater $n$ the precise minimal size of a universal testing set remains unclear. Finally, we establish an interesting connection of this problem to the following problem in Discrete Geometry: what is the minimal number of disjoint convex polytopes in $n$-dimensional space that is enough to cover any set of $s$ points in such a way that all $s$ points are on the boundaries of the polytopes (Theorem~\ref{thm:covering_connection} and Corollary~\ref{cor:covering1}~and Lemma~\ref{lem:covering2}).

The fourth question is related to the number of integer roots of a single-variable polynomial computed by an algebraic circuit. In the classical case a well known $\tau$-conjecture states that the number of integer roots of a single-variable polynomial computed by an algebraic circuit is upper bounded by a polynomial in the size of the circuit~\cite{ShubS95,Smale98,Blum98} (see~\cite{KoiranPT15,KoiranPTT15} for some recent developments). The positive answer to this conjecture would imply an algebraic version of $\P \neq \NP$ statement. The conjecture is open even for the case of algebraic formulae. We address a tropical analog of this conjecture. Interestingly, in the tropical case the answer is different for tropical formulae and tropical circuits. We observe that if a tropical polynomial of one variable is computed by a tropical formula, then the number of roots of this polynomial is upper bounded by the size of the formula (Lemma~\ref{lem:formula}). On the other hand, we show that for circuits the tropical analog of $\tau$-conjecture is false: there is a family of tropical polynomials of one variable that are computable by tropical circuits of linear size and have exponentially many integer roots (Theorem~\ref{thm:tropical_circuits}). For the proof of this result we adapt a construction from~\cite{montufar2014}.




The rest of the paper is organized as follows. 
In Section~\ref{sec:prelim} we introduce necessary definitions and notations. 
In Section~\ref{sec:comb_null} we give the results on a tropical analog of Combinatorial Nullstellensatz.
In Section~\ref{sec:schwartz_zippel} we prove a tropical analog of Schwartz-Zippel Lemma. 
In Section~\ref{sec:univ_set} we give the bounds on tropical universal sets. 
In Section~\ref{sec:tau} we prove results on the tropical analog of $\tau$-conjecture.
\section{Preliminaries} \label{sec:prelim}

A \emph{max-plus} or a \emph{tropical semiring} is defined by a set $\bb{K}$ (which we take to be $\mathbb{R}$ or
$\mathbb{Q}$ in the present paper) 
endowed with two operations, the \emph{tropical addition} $\ta$ and the \emph{tropical multiplication} $\tp$, defined in
the following way:
$$
x \ta y = \max\{x,y\}, \ \ \ \ x \tp y = x + y.
$$

A tropical (or max-plus) monomial in variables $\vec{x} = (x_1, \ldots, x_n)$ is defined as
\begin{equation} \label{eq:monomial}
m(\vec{x}) = c \tp x_1^{\tp i_1} \tp \ldots \tp x_n^{\tp i_n},
\end{equation}
where $c$ is an element of the semiring $\bb{K}$ and $i_1, \ldots, i_n$ are nonnegative integers.
In the usual notation the monomial is the linear function
$$
m(\vec{x}) = c + i_1 x_1 + \ldots + i_n x_n.
$$
For $\vec{x} = (x_1, \ldots, x_n)$ and $I = (i_1, \ldots, i_n)$
we introduce the notation
$$
\vec{x}^I = x_1^{\tp i_1} \tp \ldots \tp x_n^{\tp i_n} = i_1 x_1 + \ldots + i_n x_n.
$$
The degree of the monomial $m$ is defined as the sum $i_1 + \ldots + i_n$. We denote this sum by $|I|$.

A \emph{tropical polynomial} is the tropical sum of tropical monomials
$$
p(\vec{x}) = \bigoplus_i m_i(\vec{x})
$$
or in the usual notation $p(\vec{x}) = \max_i m_i(\vec{x})$.
The \emph{degree} of the tropical polynomial $p$ denoted by $\deg(p)$ is the maximal degree of its monomials.
A point $\vec{a} \in \bb{K}^n$ is a \emph{root} of the polynomial $p$ if the maximum $\max_i\{m_i(\vec{a})\}$
is attained on at least two distinct monomials among $m_i$ (see e.g.~\cite{RGST05first_steps} for the motivation of this definition). 
A polynomial $p$ \emph{vanishes} on the set $S \subseteq \bb{K}^n$ if all the points of $S$ are roots of $p$. A polynomial $p$ is \emph{vanishing identically} if it has no monomials.

Geometrically, a tropical polynomial $p(\vec{x})$ is a convex piece-wise linear function and the roots of $p$ are non-smoothness points of this function.

By the \emph{product} of  two tropical polynomials $p(\vec{x}) = \bigoplus_i m_i(\vec{x})$ and $q(\vec{x}) = \bigoplus_j m_j'(\vec{x})$ we naturally call a tropical polynomial $p \tp q$ that has as monomials tropical products $m_i(\vec{x}) \tp m_j'(\vec{x})$ for all $i,j$. We will make use of the following simple observation.

\begin{lemma} \label{lem:tropical_polynomials_product}
A point $\vec{a} \in \bb{K}^n$ is a root of $p \tp q$ iff it is a root of $p(\vec{x})$ or $q(\vec{x})$.
\end{lemma}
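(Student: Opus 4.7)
The plan is to prove the lemma by directly unpacking the definition of the tropical product in terms of the classical max operation and tracking which monomials attain the maximum.

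First I would write $p(\vec{x}) = \max_i m_i(\vec{x})$ and $q(\vec{x}) = \max_j m'_j(\vec{x})$, and observe that the tropical product has the explicit form
\[
(p \tp q)(\vec{a}) = p(\vec{a}) + q(\vec{a}) = \max_{i,j}\bigl(m_i(\vec{a}) + m'_j(\vec{a})\bigr),
\]
so that the monomials of $p \tp q$ are exactly the products $m_i \tp m'_j$, and their values at $\vec{a}$ reach the maximum $p(\vec{a})+q(\vec{a})$ precisely when $m_i(\vec{a}) = p(\vec{a})$ and $m'_j(\vec{a}) = q(\vec{a})$ simultaneously (since each summand is individually bounded by its own maximum, equality forces equality in each coordinate).

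For the forward direction (roots of $p$ or $q$ give roots of $p \tp q$), suppose $\vec{a}$ is a root of $p$, so that two distinct monomials $m_{i_1}, m_{i_2}$ of $p$ attain the value $p(\vec{a})$. Fix any $j^{*}$ with $m'_{j^{*}}(\vec{a}) = q(\vec{a})$. Then the two distinct pairs $(i_1, j^{*})$ and $(i_2, j^{*})$ both realize the maximum of $p \tp q$ at $\vec{a}$, so $\vec{a}$ is a root of $p \tp q$. The case where $\vec{a}$ is a root of $q$ is symmetric.

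For the converse, suppose $\vec{a}$ is a root of $p \tp q$. Then there exist two distinct index pairs $(i_1, j_1) \neq (i_2, j_2)$ both realizing the maximum. By the equality observation above, we get $m_{i_1}(\vec{a}) = m_{i_2}(\vec{a}) = p(\vec{a})$ and $m'_{j_1}(\vec{a}) = m'_{j_2}(\vec{a}) = q(\vec{a})$. Since the pairs are distinct, either $i_1 \neq i_2$, in which case two distinct monomials of $p$ both attain $p(\vec{a})$, so $\vec{a}$ is a root of $p$; or $j_1 \neq j_2$, in which case $\vec{a}$ is a root of $q$. There is no serious obstacle here; the only subtle point worth spelling out carefully is the equality observation that $\max_i A_i + \max_j B_j = A_{i_0} + B_{j_0}$ forces $A_{i_0}$ and $B_{j_0}$ individually to equal their maxima.
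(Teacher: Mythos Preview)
Your proof is correct and essentially the same as the paper's. The forward direction is identical; for the converse you argue directly (two distinct maximizing pairs force $i_1\neq i_2$ or $j_1\neq j_2$), whereas the paper uses the contrapositive (unique maximizers in $p$ and $q$ give a unique maximizing pair in $p\tp q$), but this is the same argument phrased two ways.
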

\begin{proof}
Suppose $\vec{a}$ is a root of $p$. Let $m_{i_1}(\vec{x}), m_{i_2}(\vec{x})$ be two distinct monomials of $p$ such that $m_{i_1}(\vec{a})=m_{i_2}(\vec{a}) = \max_i m_{i}(\vec{a})$. Let $m_{j_1}'(\vec{x})$ be a monomial of $q$ such that $m_{j_1}'(\vec{a})= \max_j m_{j}'(\vec{a})$. Then $m_{i_1} \tp m_{j_1}'$ and $m_{i_2} \tp m_{j_1}'$ are two distinct monomials of $p\tp q$ with the maximal value on $\vec{a}$ among all the monomials of $p \tp q$. The symmetrical argument shows that any root of $q$ is a root of $p \tp q$.

Iff $\vec{a}$ is not a root neither of $p$ nor of $q$, then there are unique $i_1$ and $j_1$ such that $m_{i_1}(\vec{a}) = \max_i m_{i}(\vec{a})$ and $m_{j_1}'(\vec{a})= \max_j m_{j}'(\vec{a})$. Then the maximal value on $\vec{a}$ among all the monomials of $p \tp q$ is attained on a single monomial $m_{i_1} \tp m_{j_1}'$ and thus $\vec{a}$ is not a root of $p \tp q$.
\end{proof}

For two vectors $\vec{a},\vec{b} \in \bb{R}^n$ throughout the paper we will denote by $\langle \vec{a}, \vec{b} \rangle$ their inner product.

\section{Tropical Combinatorial Nullstellensatz} \label{sec:comb_null}

For a polynomial $p$ denote 
by $\Supp(p)$ the set of all  $J=(j_1,\ldots,j_n)$ such that the  monomial $\vec{x}^J$ occurs in $p$. 

Consider two finite sets $S, R \subseteq \mathbb{R}^n$ such that $|S|=|R|$.
We call $S$ and $R$ \emph{non-singular} if there is a bijection $f\colon S \to R$ such that $\sum_{x\in S} \langle \vec{x}, f(\vec{x}) \rangle$ is greater than the corresponding sum for all other bijections from $S$ to $R$. Otherwise we say that $R$ and $S$ are \emph{singular}. Note that the notion of singularity is symmetrical.

First we formulate a general criterion for vanishing polynomials with a given support.

\begin{theorem} \label{thm:comb_general}
Consider a (finite) support $S \subseteq \mathbb{N}^n$ and a (finite) set of points $R \subseteq \mathbb{K}^n$. There are three cases.
\renewcommand*{\theenumi}{\thetheorem(\roman{enumi})}%
  \renewcommand*{\labelenumi}{(\roman{enumi})}%

\begin{enumerate}
\item \label{thm:comb_general1} If $|R| < |S|$, then there is a polynomial $p$ with support in $S$ vanishing on $R$.
\item \label{thm:comb_general2} If $|R|=|S|$, then there is a polynomial $p$ with support in $S$ vanishing on $R$ iff $S$ and $R$ are singular.
\item \label{thm:comb_general3} If $|R|>|S|$ then there is a polynomial $p$ with support in $S$ vanishing on $R$ iff for any subset $R'\subset R$ such that $|R'|=|S|$ we have that $R'$ and $S$ are singular.
\end{enumerate}

\begin{remark}
Before we proceed to the proof of the theorem we observe that in Theorem~\ref{thm:comb_general1} we can have not only a polynomial $p$ with $\Supp(p)\subseteq S$, but also a polynomial with the property $\Supp(p)=S$. Indeed, if some monomials with exponent vector in $S$ are missing in $\Supp(p)$, we can add them with small enough coefficients, so that the value of this monomial is smaller than the maximal values of monomials in $p$ on all points of $R$ (recall, that $R$ is finite).
\end{remark}

\end{theorem}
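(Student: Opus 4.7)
My plan centers on the reformulation that the existence of a vanishing polynomial is exactly a tropical dependence condition on the columns of the matrix $A \in \bb{K}^{R \times S}$ with $A_{r, I} = \langle I, r\rangle$: the polynomial $p(\vec{x}) = \max_{I \in S}(c_I + \langle I, \vec{x}\rangle)$ vanishes on $R$ iff the coefficients $(c_I)$ realize a tropical linear dependence among the columns of $A$, meaning that for every row $r$ the maximum $\max_I(c_I + A_{r, I})$ is attained by at least two indices. In this language, the non-singularity of $(S,R)$ in the square case $|R|=|S|$ is the classical tropical statement that the permanent of $A$ is attained by a unique permutation.

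The core case is \ref{thm:comb_general2}, $|R| = |S|$, which becomes the equivalence that a square tropical matrix has tropically dependent columns iff it is tropically singular. For the $(\Rightarrow)$ direction I would build the bipartite ``winner graph'' $G^* \subseteq R \times S$ of pairs $(r, I)$ with $I$ attaining the maximum at row $r$. Every row has degree $\ge 2$, and the identity $\sum_r p(r) = \sum_I c_I + \sum_r A_{r, \sigma(r)}$ (valid whenever $\sigma(r)$ is a winner at every $r$) forces any perfect matching inside $G^*$ to achieve the tropical permanent. An alternating-walk argument -- continually exiting a row through an unused winner edge until the walk closes into a cycle -- then produces a second distinct perfect matching, hence two maximizing permutations. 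For the $(\Leftarrow)$ direction I would pass to the optimal-dual polytope $D$ of the assignment LP; for any $(u, v)$ in the relative interior of $D$, complementary slackness with the centroid of the optimal primals (a convex combination of all optimal matchings) ensures that every non-rigid row -- one whose partner is not fixed across all optimal matchings -- has at least two tight dual constraints, which for $c_I = -v_I$ is exactly the vanishing condition there. Rigid rows are stripped off with their forced partner columns, leaving a still-singular reduced problem that is handled by the same procedure inductively; the deleted monomial $I_r$ is then reintroduced with coefficient $p'(r) - \langle I_r, r\rangle$, and the verification that this reintroduction creates a tie at $r$ without destroying the existing roots uses the original (pre-reduction) dual feasibility constraints.

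Part \ref{thm:comb_general1} follows from \ref{thm:comb_general2} via the standard tropical-linear-algebra fact that more than $|R|$ vectors in $\bb{R}^{|R|}$ are always tropically dependent; concretely, one can append a new row to $A$ chosen so that the resulting square matrix is tropically singular and then invoke \ref{thm:comb_general2}. Part \ref{thm:comb_general3} splits as usual: the forward direction is immediate, since a polynomial vanishing on $R$ vanishes on every size-$|S|$ subset $R' \subseteq R$ and \ref{thm:comb_general2} then yields singularity of each such $(S, R')$. The converse is the main obstacle of the theorem: the assumption only supplies a vanishing polynomial for each size-$|S|$ subset of $R$ separately, and these cannot simply be multiplied together without enlarging the support beyond $S$. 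My plan is to induct on $|R|$, with base $|R| = |S|$ given by \ref{thm:comb_general2}; in the inductive step, starting from a polynomial $p'$ vanishing on $R \setminus \{r\}$ (given by the induction hypothesis), the singularity assumption applied to all size-$|S|$ subsets of $R$ containing $r$ is used to show that the polyhedral coefficient region $\{c : p_c \text{ vanishes on } R \setminus \{r\}\} \subseteq \bb{R}^S$ contains some $c$ that also makes $r$ a root, via a careful analysis of the assignment-LP duals for these various subsets. Making this gluing step work uniformly across all size-$|S|$ subsets is where the main technical effort will lie.
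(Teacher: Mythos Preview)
Your reformulation --- that the existence of a vanishing polynomial with support in $S$ is exactly the tropical linear dependence of the columns of the matrix $A = (\langle I, r\rangle)_{r \in R,\, I \in S}$ --- is precisely the paper's starting point. From there the two approaches diverge: the paper simply invokes three known tropical linear-algebra facts (the tropical Cramer rule for underdetermined systems, the equivalence ``square matrix singular $\Leftrightarrow$ columns tropically dependent'', and the maximal-minors criterion for overdetermined systems), citing \cite{RGST05first_steps,AkianGG09rank,DSS05rank,Grigoriev13complexity,IzhakianR2009rank}, and is done in a few lines. You instead attempt to reprove those facts from scratch.

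That is a legitimate alternative, but your sketch for \ref{thm:comb_general2} in the direction $(\Rightarrow)$ has a real gap: you tacitly assume the winner graph $G^*$ contains a perfect matching (the ``first'' one, relative to which the alternating walk produces a second). It need not. Take $|R|=|S|=3$ with $A_{r,1}=A_{r,2}=0$ and $A_{r,3}=-10$ for every $r$; with $c=(0,0,0)$ every row has winner set $\{1,2\}$, so $G^*$ cannot match three rows into two columns. The matrix is nonetheless singular (each permutation uses column $3$ once and has value $-10$), so the conclusion holds --- your argument just does not reach it. The repair is to run the walk relative to an \emph{arbitrary optimal permutation} $\sigma^*$ rather than a matching inside $G^*$: from a row $r$ pass along a winner edge to some $I$, then back along the $\sigma^*$-edge to $(\sigma^*)^{-1}(I)$, and repeat; the cycle you close gives a permutation $\sigma'$ with $\mathrm{val}(\sigma')\ge \mathrm{val}(\sigma^*)$, hence equal, and $\sigma'\neq\sigma^*$.

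For \ref{thm:comb_general3}, the converse is --- as you correctly flag --- the substantive part. The paper outsources it entirely to the literature (equality of the tropical determinantal rank and the solvability rank). Your induction on $|R|$ is not the route those references take, and the gluing step you describe (moving inside the polyhedron $\{c : p_c \text{ vanishes on } R\setminus\{r\}\}$ to also pick up $r$, using only the singularity of the size-$|S|$ subsets through $r$) is not something the assignment-LP duals deliver in an obvious way; in particular, the reintroduced coefficient $c_{I_r}=p'(r)-\langle I_r,r\rangle$ can overshoot the recursively obtained maxima $p'(r')$ at other rows $r'$, since $p'$ was built without reference to the original dual pair $(u^*,v^*)$. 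Expect this step to require essentially the full argument of one of the cited proofs rather than a short polyhedral patch.
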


\begin{proof}
Consider a polynomial 
$$
p(\vec{x}) = \bigoplus_{J \in S} c_J \tp \vec{x}^{J}
$$
with support $S$.
The claim that $p$ has a root in $\vec{a} \in R$ means that the maximum in 
$$
\max_{J\in S} \left(c_{J} + \langle J, \vec{a} \rangle\right)
$$
is attained on at least two monomials $J_1$ and $J_2$.
Note that once $S$ and $R$ are fixed, this claim is a linear tropical equation on the coefficients $\{c_J\}_{J \in S}$ of $p$.

The claim that $p$ has a root in all the points of $R$ thus means that the coefficients of $p$ satisfy a tropical linear system with the matrix 
$$
\left(\langle J, \vec{a}\rangle\right)_{J \in S, \vec{a} \in R} \in \bb{R}^{|S|\times |R|}.
$$

The tropical Cramer rule~\cite[Theorem 5.3]{RGST05first_steps} states that if the number of rows $|R|$ in such system is less than the number of columns $|S|$, then there is always a solution. Thus, in this case there is a polynomial with roots in all the points of $R$.

If the matrix is square, that is $|R|=|S|$, then it is known~\cite[Lemma 5.1]{RGST05first_steps} that there is a solution iff the tropical determinant of the matrix is singular. Tropical determinant is a tropicalization of the classical one. That is for our matrix it is given by the expression
$$
\bigoplus_{f\colon S \to R} \left( \bigodot_{J \in S}  \langle J, f(J)\rangle \right) = \max_{f\colon S \to R} \left( \sum_{J \in S}  \langle J, f(J)\rangle \right),
$$
where $f$ ranges over all bijections from $S$ to $R$.
Its singularity means that the maximum is attained on at least two different monomials. This means that there are two bijections $f,g \colon S \to R$ with equal maximum sum $\sum_{J \in S}  \langle J, f(J)\rangle = \sum_{J \in S}  \langle J, g(J)\rangle$. Note that this is precisely the singularity of $S$ and $R$.

If the number of rows $|R|$ in the matrix is greater than the number of columns $|S|$ in it, then it is known~\cite{AkianGG09rank,DSS05rank,Grigoriev13complexity,IzhakianR2009rank} that the system has a nontrivial solution iff the tropical determinant of each square submatrix of size $|S|\times |S|$ is singular. This means precisely that for any subset $R^\prime \subset R$ such that $|R^\prime| = |S|$ the sets $R^\prime$ and $S$ are singular.
\end{proof}

Now we will derive corollaries of this general criterion.

Suppose we have a set $S \subseteq \bb{N}^n$.
Suppose also we have a set of reals $\{\alpha^{i}_{j}\}$ for $i=1,\ldots, n$, $j \in \bb{N}$ such that for each $i$ we have 
$$
\alpha^i_0 < \alpha^i_1 < \alpha^i_2 < \ldots . 
$$

For $J=(j_1,\ldots,j_n)$ we introduce the notation $\vec{\alpha}_{J} = (\alpha^{1}_{j_1},\ldots,\alpha^{n}_{j_n})$.
Consider the set $R_S = \{\vec{\alpha}_{J} \mid J \in S\}$.

\begin{remark} 
The key example for this definition is the case $\alpha_{j}^i=j$ for all $j$ and $i$. In this case $R_{S}=S$. For $S=\mathbb{N}^n$ this set is just the set of vertices of integer lattice in $n$-dimensional space. In the general case the set $R_{\mathbb{N}^n}$ is just a distorted version of this grid, where the distortion is performed in each dimension independently.
\end{remark}

We consider the following question. Suppose we have a polynomial $p$ with the support $\Supp(p) \subseteq S$. For which sets $S' \subseteq \bb{N}^n$ is it possible that $p$ vanishes on $R_{S'}$?





A natural question is the case of $S=S'$.
We show the following theorem.

\begin{theorem} \label{thm:combinatorial_null}
For any $S$ and for any non-vanishing identically tropical polynomial $p$ such that $\Supp(p) \subseteq S$ there is $\vec{r} \in R_S$ such that $\vec{r}$ is a non-root of $p$.
\end{theorem}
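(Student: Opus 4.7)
The plan is to deduce the theorem from Theorem~\ref{thm:comb_general}. Let $S' := \Supp(p)$, which is nonempty because $p$ is not vanishing identically, and set $R_{S'} := \{\vec{\alpha}_J : J \in S'\} \subseteq R_S$, so $|R_{S'}| = |S'|$. The core claim to establish is that $S'$ and $R_{S'}$ are non-singular. Granted this, if $|S'| = |R_S|$ then $R_{S'} = R_S$ and Theorem~\ref{thm:comb_general}(ii) applied with $R = R_S$ rules out any polynomial with support in $S'$ vanishing on $R_S$; otherwise $|S'| < |R_S|$ and $R_{S'}$ serves as a non-singular witness $R' \subseteq R_S$ of size $|S'|$, so Theorem~\ref{thm:comb_general}(iii) again implies no such polynomial exists. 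In either case $p$, which has support in $S'$, must have a non-root in $R_S$.

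To prove that $S'$ and $R_{S'}$ are non-singular I show that the natural bijection $f_0\colon S' \to R_{S'}$ given by $f_0(J) = \vec{\alpha}_J$ is the unique maximizer of
$$
\Phi(f) := \sum_{J \in S'} \langle J, f(J) \rangle
$$
among bijections $f\colon S' \to R_{S'}$. Any such $f$ has the form $f(J) = \vec{\alpha}_{\sigma(J)}$ for a permutation $\sigma$ of $S'$, so
$$
\Phi(f) = \sum_{i=1}^n \sum_{J \in S'} j_i\, \alpha^i_{\sigma(J)_i}.
$$
For each fixed $i$ the multisets $\{j_i : J \in S'\}$ and $\{\sigma(J)_i : J \in S'\}$ coincide, and the classical rearrangement inequality bounds the inner sum by its value at $\sigma = \mathrm{id}$, with equality iff the pairing is comonotone. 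Since $\alpha^i_k$ is strictly increasing in $k$, this comonotonicity amounts to
$$
(j_i - j_i')(\sigma(J)_i - \sigma(J')_i) \geq 0 \qquad \text{for all } J, J' \in S'.
$$

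It remains to show that a permutation $\sigma$ comonotone in every coordinate must be the identity, which upgrades the inequality $\Phi(f) \leq \Phi(f_0)$ to strict for $f \neq f_0$. Fix $i$ and partition $S'$ into level sets $L_v := \{J \in S' : j_i = v\}$. Comonotonicity gives $\sigma(J)_i \leq \sigma(J')_i$ whenever $J \in L_v$, $J' \in L_{v'}$ with $v < v'$; combined with the multiset identity $\{\sigma(J)_i : J \in S'\} = \{j_i : J \in S'\}$ (each value $v$ appearing with multiplicity $|L_v|$), a block-by-block count from the top forces $\sigma(J)_i = v$ for every $J \in L_v$. Hence $\sigma$ preserves the $i$-th coordinate; imposing this for every $i$ identifies $\sigma$ with the identity. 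The main obstacle is this last level-set bookkeeping step; once in place, everything else is a direct combination of the rearrangement inequality with Theorem~\ref{thm:comb_general}.
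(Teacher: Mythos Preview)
Your proof is correct and follows essentially the same route as the paper: reduce via Theorem~\ref{thm:comb_general} to showing non-singularity, then prove that the natural bijection $J \mapsto \vec{\alpha}_J$ is the unique maximizer of $\Phi$ by decomposing coordinate-wise and invoking the rearrangement inequality (which the paper records as Lemma~\ref{lem:perm_technical}). One small simplification worth noting: the paper applies Theorem~\ref{thm:comb_general}(ii) directly to the full set $S$ (where $|S|=|R_S|$ automatically), so the non-singularity of $S$ and $R_S$ already rules out every $p$ with $\Supp(p)\subseteq S$, and your detour through $S'=\Supp(p)$ and the case split between parts (ii) and (iii) is unnecessary.
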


An interesting case of this theorem is $S = \{0, 1,\ldots, k\}^n$. Then the result states that any non-zero polynomial of individual degree at most $k$ w.r.t. each variable $x_i$, $i=1,\ldots,n$, does not vanish on a lattice of size $k+1$.

Theorem~\ref{thm:comb_general1} and Theorem~\ref{thm:combinatorial_null} answer some customary cases of our first question. We note that the situation here is quite different from the classical case. The classical analog of Theorem~\ref{thm:combinatorial_null} for the case of $S =  \prod_{i=1}^n\{0, 1,\ldots, k_i\}$ is a simple observation. In the tropical setting it already requires some work. On the other hand, in the classical case it is known that for such $S$ the domain of the polynomial can be substantially larger then $S$ and still the polynomial remains non-vanishing on $R_S$ (see Combinatorial Nullstellensatz~\cite{Alon99}). In tropical case, however, if we extend the domain of the polynomial even by one extra monomial, then due to Theorem~\ref{thm:comb_general1} there is a vanishing non-zero polynomial.

In the proof of Theorem~\ref{thm:combinatorial_null} we will use the following simple technical lemma, that is essentially from~\cite[p.~261]{hardy1988inequalities}. We provide a proof for the sake of completeness.

\begin{lemma} \label{lem:perm_technical}
Consider two sequences of reals $v_1\leq v_2 \leq \ldots \leq v_l$ and $u_1\leq u_2 \leq \ldots \leq u_l$. Consider any permutation $\sigma \in Sym_l$ on $l$ element set. Then
$$
\sum_i v_i u_i \geq \sum_i v_i u_{\sigma(i)}.
$$
Moreover, the inequality is strict iff there are $i,j$ such that $v_i < v_j$, $u_{\sigma(j)} < u_{\sigma(i)}$.

\end{lemma}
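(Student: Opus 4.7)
The plan is to prove Lemma~\ref{lem:perm_technical} by the classical inversion-swap (bubble-sort) argument on permutations.

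First, for any permutation $\sigma$ and any pair of indices $i<j$, let $\sigma'$ be obtained from $\sigma$ by exchanging the values $\sigma(i)$ and $\sigma(j)$. A direct calculation gives
\[
\sum_k v_k u_{\sigma'(k)} \;-\; \sum_k v_k u_{\sigma(k)} \;=\; (v_j - v_i)(u_{\sigma(i)} - u_{\sigma(j)}),
\]
and since $v_j\geq v_i$ the sign of this difference is controlled by $u_{\sigma(i)} - u_{\sigma(j)}$. Call $(i,j)$ with $i<j$ an \emph{inversion} of $\sigma$ when $u_{\sigma(i)} > u_{\sigma(j)}$. Swapping any inversion weakly increases the sum and strictly decreases the number of inversions, so iterating reaches the identity permutation in at most $\binom{l}{2}$ steps. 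This already yields $\sum_i v_i u_i \geq \sum_i v_i u_{\sigma(i)}$.

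For the strict part, first suppose the hypothesis holds: there exist $i,j$ with $v_i < v_j$ and $u_{\sigma(j)} < u_{\sigma(i)}$. Because $v_1\leq\cdots\leq v_l$, the strict inequality $v_i < v_j$ forces $i < j$, so $(i,j)$ is already an inversion of $\sigma$, and performing this swap first contributes a strictly positive gain $(v_j-v_i)(u_{\sigma(i)}-u_{\sigma(j)})>0$; the subsequent swaps in the bubble-sort chain contribute non-negatively, giving $\sum_i v_i u_i > \sum_i v_i u_{\sigma(i)}$.

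Conversely, assume no such pair exists, i.e.\ $v_i < v_j$ always implies $u_{\sigma(i)} \leq u_{\sigma(j)}$. Partition $\{1,\ldots,l\}$ into the level sets $G_1,\ldots,G_m$ of the sequence $v$, ordered so that the common $v$-value on $G_a$ is strictly smaller than the one on $G_b$ whenever $a<b$. The assumption forces $\max_{i\in G_a} u_{\sigma(i)} \leq \min_{j\in G_b} u_{\sigma(j)}$ for $a<b$, and combined with the fact that $(u_{\sigma(i)})_i$ is a permutation of $(u_i)_i$ this forces, for each block $G_a$, the multiset equality $\{u_{\sigma(i)} : i\in G_a\} = \{u_i : i\in G_a\}$. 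Since $v$ is constant on each $G_a$, the partial sums $\sum_{i\in G_a} v_i u_{\sigma(i)}$ and $\sum_{i\in G_a} v_i u_i$ agree, and summing over $a$ gives equality. The only genuinely careful step is this backward direction of the strictness characterization, where ties in $v$ must be handled via the block decomposition above; the rest is routine.
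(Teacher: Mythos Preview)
Your argument is correct and follows the same inversion-swap strategy as the paper. One small slip: with your definition of inversion ($u_{\sigma(i)}>u_{\sigma(j)}$), the process need not terminate at the \emph{identity} permutation when $u$ has ties---it terminates at some $\sigma'$ with $u_{\sigma'(1)}\le\cdots\le u_{\sigma'(l)}$, which forces $u_{\sigma'(i)}=u_i$ for all $i$ and hence the same sum, so the conclusion is unaffected. For the equality case, the paper instead argues that swaps never create a new pair with $v_i<v_j$ and $u_{\sigma(j)}<u_{\sigma(i)}$, so every swap contributes zero; your block decomposition on the level sets of $v$ is a cleaner and more self-contained way to reach the same conclusion.
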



\begin{proof}
We count the number of inversions in $\sigma$: $D=|\{(i,j)\mid i<j, \sigma(j)<\sigma(i)\}|$. We show the lemma by induction on $D$.
For the step of induction we pick one inversion $(i,j)$ and swap it.
We observe that by this we do not introduce new inversions.
 
We then use the following observation: if $a\leq b$ and $c \leq d$, then
$$
bd + ac \geq bc + da.
$$
This inequality holds since it is equivalent to $ (b-a)(d-c)\geq 0$.

We also observe that the inequality is strict iff both inequalities $a\leq b$ and $c \leq d$ are strict.

So, after the swap of $i$ and $j$ the sum in the statement of the lemma does not decrease.
Thus the inequality follows.

To prove the second part of the lemma, if there is a pair $i,j$ as stated in the lemma, just switch $i$ and $j$ on the first step. By this we get the strict inequality.
If there is no such a pair $i,j$, note that we do not introduce one during the process above since we do not introduce new inversions.
\end{proof}

\begin{proof}[Proof of Theorem~\ref{thm:combinatorial_null}]
By Theorem~\ref{thm:comb_general} it is enough to show that $S$ and $R_S$ are non-singular.

Consider the bijection $f \colon S \to R_S$ given by $f(J) = \vec{\alpha}_{J}$. We claim that the maximum over all possible bijections $g$ of the sum $\sum_{J\in S} \langle J, g(J) \rangle$ is attained on the bijection $f$ and only on it.

Consider an arbitrary bijection $g \colon S \to R_S$.  Since $R_S \subseteq \bb{R}^n$ it is convenient to denote $g(J) = (g_1(J),\ldots,g_n(J))$ and $f(J) = (f_1(J),\ldots,f_n(J))$. Consider the sum 
$$
\sum_{J\in S} \langle J, g(J) \rangle = \sum_{J\in S} \sum_{i=1}^n j_i g_i(J) = \sum_{i=1}^n \sum_{J\in S} j_i g_i(J)
$$
We will show that for each $i$ 
\begin{equation} \label{eq:comb_1_app}
\sum_{J\in S} j_i g_i(J) \leq \sum_{J\in S} j_i f_i(J)
\end{equation}
and for at least one $i$
\begin{equation} \label{eq:comb_2_app}
\sum_{J\in S} j_i g_i(J) < \sum_{J\in S} j_i f_i(J)
\end{equation}
From these inequalities the theorem follows.

Take an arbitrary $i$ and consider projections of all the points in the set $S$ on the $i$-th coordinate. Enumerate these projections in the nondecreasing order:
$$
j_{1,1}=\ldots=j_{1,k_1} < j_{2,1}=\ldots=j_{2,k_2}<\ldots<j_{l,1}=\ldots=j_{l,k_l}.
$$
Different points in $S$ can have the same $i$-th coordinate, so we split points into blocks according to their $i$-th coordinate.
Due to the definition of $R_S$, the projections of its points on the $i$-th coordinate will have the same structure:
$$
r_{1,1}=\ldots=r_{1,k_1} < r_{2,1}=\ldots=r_{2,k_2}<\ldots<r_{l,1}=\ldots=r_{l,k_l}.
$$

Both bijections $f$ and $g$ induce bijections $f'$ and $g'$ from the sequence $\vec{j}$ to the sequence $\vec{r}$. Moreover $f$ induces a natural bijection: $f^\prime(j_{i_1,i_2})=r_{i_1,i_2}$. The inequality~\eqref{eq:comb_1_app} thus follows from the first part of Lemma~\ref{lem:perm_technical}.

For inequality~\eqref{eq:comb_2_app} note that since $g\neq f$ there is $J \in S $ such that $g(J) \neq \vec{\alpha}_{J}$. This means that there is $i$ such that 
$$
g_i(J) \neq \alpha^i_{j_i}.
$$ 
Thus for the bijection induced by $g$ on the coordinate $i$ we have that $g^{\prime}(j_{i_1,i_2}) = r_{i_1^\prime,i_2^\prime}$, where $i_1 \neq i_{1}^\prime$. Without loss of generality assume that $i_1 < i_{1}^\prime$, the opposite case is symmetrical. Consider the subsequence 
$$
\vec{j}'=j_{i_1^\prime,1}, \ldots, j_{i_1^\prime,k_{i_1^\prime}}, \ldots, j_{l,1}, \ldots, j_{l,k_l}.
$$
Since $j_{i_1,i_2}$ is mapped by $g^\prime$ into the sequence $\vec{j}'$ and $g^\prime$ is a bijection, there is $j_{i_3,i_4}$ in $\vec{j}'$ that is mapped outside of this sequence, that is $g(j_{i_3,i_4}) = r_{i_{3}^\prime,i_4^\prime}$, where $i_{3}^\prime < i_1^\prime$.

Denoting $a = j_{i_1,i_2}$ and $b = j_{i_3,i_4}$ we obtain that $a < b$, but $g^\prime(a) > g^\prime(b)$. By Lemma~\ref{lem:perm_technical} this gives inequality~\eqref{eq:comb_2_app}.
\end{proof}

\section{Tropical Analog of Schwartz-Zippel Lemma} \label{sec:schwartz_zippel}

Using the results of the previous section we can prove an analog of Schwartz-Zippel Lemma for tropical polynomials.

\begin{theorem} \label{thm:schwartz-zippel}
Let $S_1, S_2, \ldots, S_n \subseteq \bb{K}$, denote $|S_i|=k_i$.
Then for any $d \leq \min_i k_i$ the maximal number of roots a non-vanishing identically tropical polynomial $p$ of degree $d$ can have in $S_1\times \ldots \times S_n$ is equal to
$$
\prod_{i=1}^{n} k_i - \prod_{i=1}^n \left(k_i-d\right).
$$
Exactly the same statement is true for polynomials with the individual degree in each variable at most $d$.
\end{theorem}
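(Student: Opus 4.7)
Plan. I will prove matching upper and lower bounds. The lower bound comes from an explicit construction, and the upper bound will be reduced via Theorem~\ref{thm:comb_general} to a combinatorial claim about non-singular subsets.

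\emph{Construction (lower bound).} Order each $S_i = \{s_{i,0} < \cdots < s_{i,k_i-1}\}$, and for $j=1,\dots,d$ set
\[
q_j(\vec x) \;=\; 0 \ta \bigoplus_{i=1}^n (x_i \td s_{i,k_i-j}),
\]
a degree-one tropical polynomial, and take $p = q_1 \tp q_2 \tp \cdots \tp q_d$. By iterating Lemma~\ref{lem:tropical_polynomials_product}, a point $\vec a$ is a root of $p$ iff it is a root of some $q_j$. A short case analysis on $q_j$ shows that $\vec a$ is a root of some $q_j$ exactly when $\min_i(k_i - 1 - \rho_i(\vec a)) < d$, where $\rho_i(\vec a)$ is the rank of $a_i$ in the sorted listing of $S_i$; equivalently, when at least one coordinate $a_i$ lies among the top $d$ values of $S_i$. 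The cardinality of this set is $\prod_i k_i - \prod_i(k_i-d)$. Being a tropical product of $d$ degree-one polynomials, $p$ has total degree $d$ and individual degree at most $d$ in each variable, so the very same construction realises the lower bound in both versions of the theorem.

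\emph{Upper bound.} Any polynomial of total degree $\le d$ has individual degree $\le d$ in each variable, so it suffices to prove the upper bound in the individual-degree version. Then $\Supp(p) \subseteq S := \{0,\dots,d\}^n$ with $|S| = (d+1)^n$, and $p$ vanishes on its root set $R := R(p)$. By Theorem~\ref{thm:comb_general}, once $|R| \ge |S|$, every size-$|S|$ subset of $R$ must be singular with respect to $S$. Hence the bound reduces to the combinatorial statement: any $R \subseteq S_1 \times \cdots \times S_n$ with $|R| > \prod_i k_i - \prod_i(k_i-d)$ contains a non-singular size-$(d+1)^n$ subset with respect to $\{0,\dots,d\}^n$. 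A natural class of non-singular subsets is the axis-aligned product sub-grids $T_1 \times \cdots \times T_n$ with $|T_i|=d+1$, which are non-singular by the rearrangement-inequality argument (Lemma~\ref{lem:perm_technical}) used in the proof of Theorem~\ref{thm:combinatorial_null}. The plan is to proceed by induction on $n$: slice $R$ along $x_n$, apply the $(n-1)$-dimensional statement to each slice, and piece sub-grids together across slices via a Hall-type matching of the $\prod_i(k_i - d)$ offsets $(j_1,\dots,j_n) \in \prod_i\{0,\dots,k_i - d - 1\}$ to non-roots of $p$ living in the corresponding shifted sub-grids $\prod_i\{s_{i,j_i},\dots,s_{i,j_i+d}\}$.

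\emph{Main obstacle.} Product sub-grids alone are insufficient: Zarankiewicz-type constructions exhibit an $R$ with $|R^c| < \prod_i(k_i-d)$ containing no $(d+1)^n$ product sub-grid, while $R$ must still harbour a (necessarily non-product) non-singular subset. The combinatorial claim therefore has to invoke more general non-singular configurations—sets of $(d+1)^n$ points admitting a unique coordinate-wise-monotone bijection with $\{0,\dots,d\}^n$—produced from the rearrangement inequality applied to non-product arrangements. I expect the bulk of the technical work, and the principal source of difficulty, to lie in verifying Hall's condition for the sub-grid/non-root matching above: namely, that every union of shifted sub-grids contains a proportionate number of non-roots of $p$.
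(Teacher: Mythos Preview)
Your construction for the lower bound is essentially the paper's, and it is correct.

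The gap is in the upper bound. You have correctly reformulated the statement via Theorem~\ref{thm:comb_general}\,(iii) as a purely combinatorial claim---every sufficiently large $R\subseteq S_1\times\cdots\times S_n$ contains a size-$(d+1)^n$ subset non-singular with $\{0,\dots,d\}^n$---but you then acknowledge that product sub-grids do not suffice (Zarankiewicz), that the required non-singular configurations are not described, and that the Hall condition you would need is unverified. As written this is a plan with an identified obstacle, not a proof; and there is no indication that the obstacle can be removed along the lines you sketch.

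The paper bypasses this entirely with a short multiplicative trick. Suppose $p$ has individual degrees $\le d$ and more than $\prod_i k_i-\prod_i(k_i-d)$ roots in $S_1\times\cdots\times S_n$; then its set $R$ of \emph{non}-roots there has size at most $\prod_i(k_i-d)-1$. Since the support $\prod_i\{0,\dots,k_i-d-1\}$ has $\prod_i(k_i-d)>|R|$ monomials, Theorem~\ref{thm:comb_general}\,(i) yields a nontrivial polynomial $q$ with that support vanishing on $R$. Now $p\tp q$ is nontrivial, has support contained in $\prod_i\{0,\dots,k_i-1\}$, and by Lemma~\ref{lem:tropical_polynomials_product} vanishes on all of $S_1\times\cdots\times S_n$, contradicting Theorem~\ref{thm:combinatorial_null}. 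The point you are missing is precisely this complement-and-multiply step: rather than hunting for a non-singular subset inside the roots of $p$, one covers the \emph{non}-roots with an auxiliary polynomial and invokes the Combinatorial Nullstellensatz once on the full grid.
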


In particular, we have the following corollary.
\begin{corollary}
Let $S \subseteq \bb{K}$ be a set of size $k$.
Then for any $d \leq k$ the maximal number of roots a non-vanishing identically tropical polynomial $p$ of degree $d$ can have in $S^n$ is equal to
$$
k^n - (k-d)^n.
$$
Exactly the same statement is true for polynomials with the individual degree in each variable at most $d$.
\end{corollary}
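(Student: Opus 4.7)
I will prove achievability with explicit constructions and the upper bound by contradiction via Theorem~\ref{thm:comb_general}. For achievability, let $a^i_1<\cdots<a^i_{k_i}$ enumerate $S_i$. In the individual-degree case I take
\[
p_{\text{ind}}(\vec{x}) = \bigodot_{i=1}^n \bigodot_{j=1}^d (x_i \oplus a^i_j).
\]
By Lemma~\ref{lem:tropical_polynomials_product} a point is a root iff some factor $x_i\oplus a^i_j$ vanishes, so the non-roots form the box $\prod_i (S_i\setminus\{a^i_1,\dots,a^i_d\})$ of size $\prod(k_i-d)$. In the total-degree case I take
\[
p_{\text{tot}}(\vec{x}) = \bigodot_{s=1}^d \Bigl(c_s \oplus \bigoplus_{i=1}^n(x_i + c_s - a^i_{k_i-s+1})\Bigr)
\]
for arbitrary $c_s\in\bb{K}$; each of the $d$ factors has total degree $1$, so $\deg(p_{\text{tot}})=d$. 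A direct case analysis of each factor---when $a_{i_0}=a^{i_0}_{k_{i_0}-s_0+1}$, the term $x_{i_0}+c_{s_0}-a^{i_0}_{k_{i_0}-s_0+1}$ ties with the constant $c_{s_0}$ in factor $s_0$, forcing a root---shows the non-roots of $p_{\text{tot}}$ are exactly $\prod_i\{a^i_1,\dots,a^i_{k_i-d}\}$, again $\prod(k_i-d)$ points.

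For the upper bound, since total degree~$\leq d$ implies individual degree~$\leq d$ in each variable, it suffices to treat polynomials of individual degree~$\leq d$. Suppose $p$ is nontrivial, has individual degree~$\leq d$, and has root set $R\subseteq \prod S_i$ with $|R|>\prod k_i-\prod(k_i-d)$. By adjoining any missing monomials from $\{0,\dots,d\}^n$ with coefficients small enough to be strictly dominated on the finite set $\prod S_i$ (which leaves $R$ unchanged), I may assume $\Supp(p)=\{0,\dots,d\}^n$, of size $(d+1)^n$. A short monotonicity argument shows $\prod k_i-\prod(k_i-d)\ge (d+1)^n-1$ whenever $d\le\min_i k_i$ (the left side is nondecreasing in each $k_i$ and equals $(d+1)^n-1$ when every $k_i=d+1$), so $|R|\ge (d+1)^n$. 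Thus Theorem~\ref{thm:comb_general} applies in case~(ii) or~(iii), and it suffices to exhibit one $R'\subseteq R$ of size $(d+1)^n$ that is non-singular with $\{0,\dots,d\}^n$: such an $R'$ directly contradicts the existence of a polynomial with support $\{0,\dots,d\}^n$ having roots throughout $R$.

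The main obstacle is the construction of this $R'$, which I plan to carry out recursively as a ``tree'' avoiding $R^c$. Pick $X_1\subseteq S_1$ with $|X_1|=d+1$; for each $x_1\in X_1$ pick $X_2^{(x_1)}\subseteq S_2$ with $|X_2^{(x_1)}|=d+1$; continue level by level, setting $R'=\{(x_1,\dots,x_n):x_1\in X_1,\,x_2\in X_2^{(x_1)},\dots\}$. Iterating Lemma~\ref{lem:perm_technical} level by level, the bijection that pairs the sorted exponents to the sorted coordinates inside each fiber is the unique maximizer of $\sum_I\langle I,f(I)\rangle$ among bijections $f\colon\{0,\dots,d\}^n\to R'$, so every such tree $R'$ is automatically non-singular with $\{0,\dots,d\}^n$. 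To fit the tree inside $R$ I induct on $n$: call $x_1\in S_1$ \emph{bad} if $|R^c\cap(\{x_1\}\times\prod_{i\ge 2}S_i)|\ge\prod_{i\ge 2}(k_i-d)$; the hypothesis $|R^c|<\prod(k_i-d)$ forces at most $k_1-d-1$ bad values, leaving at least $d+1$ good $x_1$ from which to pick $X_1$, and the induction hypothesis supplies the remaining subtree in each good fiber. Getting this quantitative propagation to balance out---distributing the ``budget'' $|R^c|$ across coordinates---is where most of the work lies.
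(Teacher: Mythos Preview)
Your achievability constructions are fine. The individual-degree product $\bigodot_{i,j}(x_i\oplus a^i_j)$ is a clean alternative, and your total-degree construction is essentially identical to the paper's product of $d$ linear factors.

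The gap is in the upper bound. Your claim that any tree-structured $R'$ is non-singular with $\{0,\dots,d\}^n$, justified by ``iterating Lemma~\ref{lem:perm_technical} level by level'', is false. That lemma yields \emph{uniqueness} of the maximizing bijection only when the block structure of each coordinate matches between the two sets (this is exactly how the proof of Theorem~\ref{thm:combinatorial_null} uses it), and a tree with fibre-dependent $X_2^{(x_1)}$ destroys that matching. A concrete instance arising from your own recursion: take $n=2$, $d=1$, $S_1=\{0,1\}$, $S_2=\{0,1,2,3\}$, and $R^c=\{(0,2),(1,0)\}$, so $|R^c|=2<3=(k_1-d)(k_2-d)$. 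Both values of $x_1$ are good; in the fibre $x_1=0$ you may pick $X_2^{(0)}=\{0,1\}$, and in the fibre $x_1=1$ you may pick $X_2^{(1)}=\{2,3\}$, giving $R'=\{(0,0),(0,1),(1,2),(1,3)\}$. Your ``sorted'' bijection gives
\[
\langle(0,0),(0,0)\rangle+\langle(0,1),(0,1)\rangle+\langle(1,0),(1,2)\rangle+\langle(1,1),(1,3)\rangle=0+1+1+4=6,
\]
but the bijection $(0,0)\!\mapsto\!(0,1)$, $(0,1)\!\mapsto\!(1,3)$, $(1,0)\!\mapsto\!(0,0)$, $(1,1)\!\mapsto\!(1,2)$ gives $0+3+0+3=6$ as well. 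Hence $R'$ and $\{0,1\}^2$ are singular, and your argument yields no contradiction. There may well be another $4$-subset of $R$ that is non-singular, but nothing in your construction or your budget-counting locates one.

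The paper avoids this search altogether by working with the \emph{non}-root set. Since $|R^c|<\prod_i(k_i-d)$, Theorem~\ref{thm:comb_general}(i) supplies a polynomial $q$ with support $\prod_i\{0,\dots,k_i-d-1\}$ vanishing on $R^c$; then $p\odot q$ vanishes on the entire grid $\prod_i S_i$ while having support inside $\prod_i\{0,\dots,k_i-1\}$, which contradicts Theorem~\ref{thm:combinatorial_null} directly. This multiplication trick is the idea your approach is missing.
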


\begin{proof}[Proof of Theorem~\ref{thm:schwartz-zippel}]

The upper bound is achieved on the product of $d$ linear polynomials. Indeed, denote $S_i = \{s_{i,1}, s_{i,2}, \ldots, s_{i,{k_i}}\}$, where $s_{i,1} > s_{i,2} > \ldots > s_{i,{k_i}}$. For $j=1,\ldots, d$ denote by $p_j$ the following linear polynomial:
$$
p_j(\vec{x}) = (-s_{1,j} \tp x_1) \ta \ldots \ta (-s_{i,j} \tp x_i) \ta \ldots \ta (-s_{n,j} \tp x_n) \ta 0.
$$
Observe that $\vec{a} \in S_1 \times \ldots \times S_n$ is a root of $p_j$ if for some $i$ $a_i = s_{i,j}$ and for the rest of $i$ we have $a_i \leq s_{i,j}$.

Consider a degree $d$ polynomial
$
p(\vec{x}) = \bigodot_{j=1}^{d} p_{j}(\vec{x}).
$
Then from Lemma~\ref{lem:tropical_polynomials_product} we have that $\vec{a} \in S_1 \times \ldots \times S_n$ is a non-root of $p$ iff for all $i$ $a_i < s_{i,{d}}$. Thus the number of non-roots of $p$ is 
$
\prod_{i=1}^{n} \left( |S_i| - d\right).
$
This proves the upper bound.

For the lower bound, suppose there is a polynomial $p$ with the individual degrees $d$ that has more than $
\prod_{i=1}^{n} k_i - \prod_{i=1}^n \left(k_i-d\right)
$ roots in $S_1\times \ldots \times S_n$. Then the number of its non-roots in this set is at most $\prod_{i=1}^n \left(k_i-d\right) - 1$. Denote the set of all non-roots by $R$.

Consider a family of all the polynomials of the individual degree at most $k_i-d-1$ in variable $x_i$ for all $i$. Then their (common) support is of size $\prod_{i=1}^n \left(k_i-d\right)$. Since the size of the support is greater than $R$, by Theorem~\ref{thm:comb_general1} there is a polynomial $q$ with this support that vanishes on $R$.

Then, by Lemma~\ref{lem:tropical_polynomials_product} the non-zero polynomial $p\tp q$ vanishes on $S_1\times \ldots \times S_n$ and on the other hand has support 
$
\{0,\ldots, k_1-1\}\times \ldots \times \{0,\ldots, k_n-1\}.
$
This contradicts Theorem~\ref{thm:combinatorial_null}. Thus there is no such polynomial $p$ and the theorem follows.
\end{proof}

\section{Tropical Universal Testing Set} \label{sec:univ_set}

In this section we study the minimal size of a universal testing set for sparse tropical polynomials.
It turns out that in the tropical case there is a big difference between testing sets over $\bb{R}$ and $\bb{Q}$. Thus, we consider these two cases separately below.

Throughout this section we denote by $n$ the number of variables in the polynomials, by $k$ the number of monomials in them and by $s$ the number of points in a universal testing set.

\subsection{Testing sets over $\bb{R}$}

In this section we will show that the minimal size $s$ of the universal testing set over $\bb{R}$ is equal to $k$.

\begin{theorem} \label{thm:univ_set_r}
For tropical polynomials over $\bb{R}$ the minimal size $s$ of the universal testing set for polynomials with at most $k$ monomials is equal to $k$.
\end{theorem}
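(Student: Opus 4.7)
The plan is to split the equality $s=k$ into the lower bound $s \geq k$, which follows immediately from Theorem~\ref{thm:comb_general}, and the upper bound $s \leq k$, which requires exhibiting an explicit universal testing set of size $k$.

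For the lower bound, suppose $R \subseteq \bb{R}^n$ is a universal testing set with $|R|=s<k$. Choose any $S \subseteq \bb{N}^n$ with $|S|=s+1\leq k$ (always possible). By Theorem~\ref{thm:comb_general1} together with the remark following it, there is a nontrivial polynomial $p$ with $\Supp(p)=S$ (hence at most $k$ monomials) that vanishes on every point of $R$, contradicting the assumption that $R$ is universal.

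For the upper bound, I would reduce the multivariate problem to the univariate one by placing the $k$ test points on a single line through the origin in a generic direction. The key univariate fact is that a tropical polynomial in one variable with $\ell$ monomials, viewed as a convex piecewise-linear function, has at most $\ell-1$ non-smoothness points, hence at most $\ell-1\leq k-1$ roots. Concretely, choose $\vec{v}\in\bb{R}^n$ whose coordinates are linearly independent over $\bb{Q}$ (for example $\vec{v}=(\sqrt{p_1},\ldots,\sqrt{p_n})$ for distinct primes $p_i$), and pick $k$ distinct real numbers $c_1<c_2<\dots<c_k$. Set $\vec{a}_i=c_i\vec{v}$. Since $\langle J_1-J_2,\vec{v}\rangle\neq 0$ for any distinct $J_1,J_2\in\bb{N}^n$, the scalars $\alpha_J:=\langle J,\vec{v}\rangle$ are pairwise distinct for all possible exponent vectors simultaneously.

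Now for any nontrivial polynomial $p(\vec{x})=\bigoplus_{J\in S}c_J\tp\vec{x}^J$ with $|S|\leq k$, the map
$$
t\ \longmapsto\ \max_{J\in S}\bigl(c_J+t\,\alpha_J\bigr)
$$
is a convex piecewise-linear function of $t$ with the $|S|$ pairwise distinct slopes $\{\alpha_J\}_{J\in S}$, hence has at most $|S|-1\leq k-1$ non-smoothness points. A point $\vec{a}_i$ is a root of $p$ exactly when $c_i$ is such a non-smoothness point, so among the $k$ values $c_1,\ldots,c_k$ at least one gives a non-root of $p$. Thus $\{\vec{a}_1,\ldots,\vec{a}_k\}$ is a universal testing set of size $k$.

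The only nonroutine step is the choice of $\vec{v}$: it must simultaneously separate $\langle J,\vec{v}\rangle$ for \emph{all} exponent vectors, not just those in some fixed support. This is what rational independence over $\bb{Q}$ buys us, and it is precisely this step that breaks over $\bb{Q}$ — which is why the bound $\Theta(kn)$ rather than $k$ appears in the $\bb{K}=\bb{Q}$ case treated in the subsequent subsection.
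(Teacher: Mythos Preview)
Your proof is correct, and the lower bound is identical to the paper's. The upper bound, however, takes a genuinely different route.

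The paper chooses $s$ points $\vec{a}_1,\ldots,\vec{a}_s$ whose \emph{full set of $sn$ coordinates} is $\bb{Q}$-linearly independent, then builds a bipartite graph on monomials versus points, with an edge whenever a monomial attains the maximum at that point. The $\bb{Q}$-independence is used to rule out cycles (a cycle would produce a nontrivial integer linear relation among the coordinates), so the graph is a forest; since every point-vertex has degree $\geq 2$, a counting argument on each tree forces $k\geq s+1$.

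Your argument instead collapses the problem to one dimension: you place all $k$ points on a single line $t\mapsto t\vec v$ with $\vec v$ having $\bb{Q}$-independent coordinates, so that distinct exponent vectors $J$ give distinct slopes $\langle J,\vec v\rangle$. The restriction of any $k$-monomial polynomial to this line is then an upper envelope of $\leq k$ lines with pairwise distinct slopes, hence has $\leq k-1$ breakpoints, and one of the $k$ chosen parameters must miss them all.

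Your approach is shorter and more transparent; it isolates exactly what $\bb{Q}$-independence buys (injectivity of $J\mapsto\langle J,\vec v\rangle$) and then invokes only the elementary fact about upper envelopes. The paper's graph argument is a bit more flexible in that it applies to \emph{any} $s$-point set with all coordinates $\bb{Q}$-independent, not just collinear configurations, but for the purpose of the theorem this extra generality is not needed.
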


\begin{proof}
First of all, it follows from Theorem~\ref{thm:comb_general1} that for any set of $s$ points there is a polynomial with an arbitrary support having $k=s+1$ monomials that has roots in all $s$ points. Thus, the universal testing set has to contain at least as many points as there are monomials, and we have the inequality $s \geq k$.

Next we show that $s \leq k$. 
Consider a set of $s$ points $S = \{\vec{a}_1,\ldots, \vec{a}_s\} \in \bb{R}^n$ that have linearly independent over $\bb{Q}$ coordinates. Suppose we have a polynomial $p$ with $k$ monomials that has roots in all the points $\vec{a}_1,\ldots, \vec{a}_s$. We will show that $k \geq s+1$. Thus, we will establish that $S$ is a universal set for $k=s$ monomials.

Suppose the monomials of $p$ are $m_1,\ldots, m_k$, where $m_i(\vec{x})= c_i \tp \vec{x}^{J_i}$. Introduce the notation $p(\vec{a}_j) = \max_{i} (m_{i}(\vec{a}_j)) = p_j$. Since $a_j$ is a root, the value $p_j$ is achieved on at least two monomials.

Note that the monomial $m_i$ has the value $p_j$ in the point $\vec{a}_j$ iff 
$$
\langle\vec{a}_j, J_i\rangle + c_i = p_j.
$$

Now, consider a bipartite undirected graph $G$. The vertices in the left part correspond to monomials of $p$ ($k$ vertices). The vertices in the right part correspond to the points in $S$ ($s$ vertices). We connect vertex $m_i$ in the left part to the vertex $\vec{a}_j$ in the right part iff $m_i(\vec{a_j}) = p_j$.

Observe, that the degree of vertices in the right part is at least 2 (this means exactly that they are roots of $p$). 

Now, we will show that there are no cycles in $G$. Indeed, suppose there is a cycle. For the sake of convenience of notation assume the sequence of the vertices of the cycle is
$$
m_1, \vec{a}_1, m_2, \vec{a}_2, \ldots, m_l, \vec{a}_l.
$$
Note that since the graph is bipartite, the cycle is of even length.
In particular, for all $i=1,\ldots, l$ we have $m_i(\vec{a_i}) = p_i$, that is
\begin{equation} \label{eq:univ_on_r1_app}
\langle\vec{a}_i, J_i\rangle + c_i = p_i.
\end{equation}
Also for all $i=1,\ldots, l$ we have $m_{i+1}(\vec{a_i}) = p_i$ (for convenience of notation assume here $m_{l+1}=m_1$), that is
\begin{equation} \label{eq:univ_on_r2_app}
\langle\vec{a}_i, J_{i+1}\rangle + c_{i+1} = p_i.
\end{equation} 
Let us sum up all equations in~\eqref{eq:univ_on_r1_app} for all $i=1,\ldots,l$ and subtract from the result all the equations in~\eqref{eq:univ_on_r2_app}.
It is easy to see that all $c_i$'s and $p_i$'s will cancel out and thus we will have
$$
\langle \vec{a}_1, J_1\rangle - \langle \vec{a}_1, J_2\rangle + \langle \vec{a}_2, J_2\rangle - \langle \vec{a}_2, J_3\rangle + \ldots + \langle \vec{a}_l, J_l\rangle - \langle \vec{a}_l, J_1\rangle = 0.
$$
Since $J_1 \neq J_2$, we have a nontrivial linear combination with integer coefficients of the coordinates of vectors $\vec{a}_1,\ldots, \vec{a}_l$. Since the coordinates of these vectors are linearly independent over $\bb{Q}$, this is a contradiction.
Thus, we have shown that there are no cycles in $G$.

Therefore, the graph $G$ is a forest. Consider each of the trees of the forest separately. We will show that in each of these trees $T$ the number $L$ of vertices in the left part is greater than the number $R$ of vertices in the right part. Indeed, since the degree of each vertex in the right side is at least $2$, the number of edges in $T$ is at least $2R$. The number of vertices in a tree is by one greater than the number of edges. Thus, there are at least $2R+1$ vertices in $T$.
That is
$$
R+L \geq 2R+1,
$$
and thus $L \geq R+1$.
Since this holds for each tree, summing up these inequalities over all the trees we have
$$
k \geq s + 1.
$$

Thus, the set $S$ is a universal set against polynomials with $k=s$ monomials and the theorem follows.
\end{proof}

\subsection{Testing sets over $\bb{Q}$}

The main difference of the problem over the semiring $\bb{Q}$ compared to the semiring $\bb{R}$ is that now the points of the universal set have to be rational. 

In this section we consider, somewhat more generally, tropical polynomials with rational (possibly negative) powers of variables. We note that this does not actually affect the questions under consideration: for each such polynomial there is another polynomial with natural exponents with the same set of roots and the same number of monomials. Indeed, suppose $p$ is a polynomial with rational exponents. Recall that 
\begin{equation} \label{eq:polynomial_with_rational_coef}
p(\vec{x}) = \max(m_1(\vec{x}),\ldots, m_{k}(\vec{x})),
\end{equation}
where $m_1, \ldots, m_k$ are monomials. Recall that each monomial is a linear function over $\vec{x}$. Note that if we multiply the whole expression~\eqref{eq:polynomial_with_rational_coef} by some positive constant and add the same linear form $m(\vec{x})$ to all monomials, the resulting polynomial will have the same set of roots. Therefore, we can get rid of rational degrees in $p$ by multiplying $p$ by large enough integer, and then we can get rid of negative degrees by adding to $p$ a linear form $m$ with large enough coefficients.

Thus, throughout this section we consider polynomials with rational exponents.

It will be convenient to state the results of this section using the following notation. Let $k(s,n)$ be the minimal number such that for any set $S$ of $s$ points in $\bb{Q}^n$ there is a tropical polynomial on $n$ variables with at most $k(s,n)$ monomials having roots in all the points of $S$.  Note that there is a universal testing set of size $s$ for polynomials with $k$ monomials iff $k<k(s,n)$. Thus, we can easily obtain bounds on the size of the minimal universal testing set from the bounds on $k(s,n)$.

We start with the following upper bound on $k(s,n)$.
\begin{theorem} \label{thm:upper bound on k}
We have $k(s,n) \leq \left\lceil \frac{2s}{(n+1)}\right\rceil + 1$.

Equivalently, for the size of the minimal universal testing set the following inequality holds: $s\geq \frac{(k-1)(n+1)+1}{2}$.
\end{theorem}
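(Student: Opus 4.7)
The plan is to construct, for any set $\{\vec{a}_1, \ldots, \vec{a}_s\} \subseteq \bb{Q}^n$, a tropical polynomial with at most $\lceil 2s/(n+1)\rceil + 1$ monomials vanishing on all $s$ points, via an iterative procedure that adds monomials in pairs. Since each added pair will be responsible for $n+1$ new roots, the pair count is roughly $s/(n+1)$, and a background monomial absorbs the "$+1$".

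I would start from the constant monomial $m_0 \equiv 0$ (whose polynomial has no roots), and at each subsequent step add a pair $(m,m')$ to the current polynomial $p$ that turns $n+1$ previously non-root points into roots. Pick $n$ uncovered points and let $H$ be a hyperplane through them (always possible, since any $n$ points in $\bb{Q}^n$ lie on a common affine hyperplane), and pick one further uncovered point $\vec{a}_0$. Impose the linear conditions $m \equiv m'$ on $H$ (so that $m - m'$ is a scalar $\lambda$ times the linear functional cutting out $H$) and $m(\vec{a}_0) = p(\vec{a}_0)$. Then the $n$ points on $H$ become roots of $\max(p, m, m')$ provided $m = m' \geq p$ there (two monomials attain the max), while $\vec{a}_0$ becomes a root because $m$ matches the current max-achiever of $p$ at that point. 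To preserve previously created roots, I simultaneously require $m, m' \leq p$ at every already-established root.

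After $\lfloor s/(n+1)\rfloor$ such pair-additions, at most $r = s - \lfloor s/(n+1)\rfloor(n+1) \leq n$ uncovered points remain. If $r \leq \lceil(n+1)/2\rceil$, a single additional monomial suffices: it has $n+1$ parameters, $r$ equality constraints $m(\vec{a}) = p(\vec{a})$, and $\lfloor(n+1)/2\rfloor$ degrees of freedom left to satisfy the preservation inequalities. Otherwise one final pair is used instead. Summing, the total count becomes $1 + 2\lfloor s/(n+1)\rfloor + \lceil 2r/(n+1)\rceil = \lceil 2s/(n+1)\rceil + 1$, matching the bound.

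The hard part will be establishing feasibility at each pair-addition: showing that the $(n+1)$-dimensional affine subspace of $(m,m')$ cut out by the $n+1$ equalities meets the open region defined by the inequalities $m = m' \geq p$ on the $n$ hyperplane points together with $m, m' \leq p$ at each existing root. The plan is to exploit the scaling parameter $\lambda$, which controls the gap $m - m'$ away from $H$, together with a global affine shift in $(c, J)$-space, to push $m$ and $m'$ below $p$ at the already-processed points while simultaneously lifting them at or above $p$ on the new target points. The argument should use the inductively maintained structure of $p$ as the upper envelope of previously added monomials — each existing root being a shared boundary of at least two linear pieces of $p$ — which leaves enough geometric room to satisfy the inequalities within the available freedom.
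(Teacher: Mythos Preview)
Your outline has the right arithmetic, but the geometric heart of the construction---the feasibility of each pair-addition---is not established, and in fact fails for the arbitrary choice of hyperplane you describe. You say ``pick $n$ uncovered points and let $H$ be a hyperplane through them,'' then require $m=m'\geq p$ at those $n$ points while $m,m'\leq p$ at every previously created root. But an affine function that is large on $H$ cannot be made small on \emph{both} sides of $H$; if already-processed roots lie on both sides (or on the wrong side relative to the extra point $\vec a_0$), the system is infeasible. A concrete failure in dimension $n=1$: take $S=\{0,1,2,3\}$, process $\{0,1\}$ first (producing a polynomial whose value at $x=1,2,3$ is $0$), then try $H=\{2\}$ with $\vec a_0=3$. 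The constraints $m''(3)=0$, $m''(2)\geq 0$, $m''(1)\leq 0$ force $m''\equiv 0$, which is not a new monomial. Your closing paragraph about ``scaling $\lambda$ and a global affine shift'' does not address this: with already-created roots on both sides of $H$, no amount of scaling or shifting rescues the inequalities.

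The paper's proof closes exactly this gap by never choosing the hyperplane arbitrarily: it takes $\pi$ to be a supporting hyperplane of the convex hull of the remaining points $S$, so that all points not on $\pi$ lie strictly in one open half-space. Then an elementary claim about affine functions (prescribe values on $\pi$, force large on one side, small on the other) makes the required inequalities automatically satisfiable. The induction peels off a face at a time, distinguishing the cases $l\leq n$ (one new monomial suffices) and $l\geq n+1$ (two new monomials). If you want to repair your argument, the missing idea is precisely this convex-hull peeling; without it, the ``hard part'' you flag is not merely hard but false as stated.
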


We note that this theorem already shows the difference between universal testing sets over $\bb{R}$ and $\bb{Q}$ semirings.

\begin{proof}
Observe that two statements of the theorem are equivalent. Indeed, by our definition of $k(s,n)$ the first statement is equivalent to the inequality $k< \left\lceil \frac{2s}{(n+1)}\right\rceil + 1$, where $s$ is the size of the minimal testing set for polynomials with $k$ monomials. It is easy to see that this is true iff $s>(k-1)(n+1)/2$. The minimal integer $s$ for which this inequality holds is $s=\frac{(k-1)(n+1)+1}{2}$. Thus, the inequality is equivalent to $s\geq \frac{(k-1)(n+1)+1}{2}$. Thus, it remains to prove the first statement of the theorem.

We will show that for any set $S =  \{\vec{a}_1,\ldots, \vec{a}_s\}\subseteq \bb{Q}^n$ of size $s$ there is a nontrivial polynomial with at most $k= \lceil \frac{2s}{(n+1)}\rceil + 1$ monomials that has roots in all of the points in $S$. From this the inequalities in the theorem follow.

Throughout this proof we will use the following standard facts about (classical) affine functions on $\bb{Q}^n$.

\begin{claim} \label{cl:geometry}
Suppose $\pi$ is an $(n-1)$ dimensional hyperplane in $\bb{Q}^n$. Let $P_1$ be a finite set of points in one of the (open) halfspaces w.r.t. $\pi$ and $P_2$ be a finite set of points in the other (open) halfspace. Let $C_1$ and $C_2$ be some constants.
Then the following is true.
\begin{enumerate}
\item If $\vec{a}_1,\ldots, \vec{a}_n \in \pi$ are points in a general position in $\pi$ (that is, not lying in $(n-2)$-dimension linear space) and $p_1,\ldots, p_n$ are some constants in $\bb{Q}$, then there is an affine function $f$ on $\bb{Q}^n$ such that $f(\vec{a}_i)=p_i$ for all $i$, $f(\vec{x}) > C_1$ for all $\vec{x} \in P_1$ and $f(\vec{x})<C_2$ for all $\vec{x} \in P_2$.
\item If $g$ is an affine function on $\bb{Q}^n$ then there is another affine function $f$ on $\bb{Q}^n$ such that $f(\vec{x})=g(\vec{x})$ for all $\vec{x} \in \pi$, $f(\vec{x}) > C_1$ for all $\vec{x} \in P_1$ and $f(\vec{x})<C_2$ for all $\vec{x} \in P_2$.
\end{enumerate}
\end{claim}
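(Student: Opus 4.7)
The plan is to construct $f$ as a perturbation of a base affine function by a scalar multiple of a linear form cutting out $\pi$. Concretely, I would first choose an affine map $L\colon \bb{Q}^n \to \bb{Q}$ whose zero set is exactly $\pi$, which exists since $\pi$ is $(n-1)$-dimensional and defined over $\bb{Q}$. By hypothesis $L$ has opposite signs on the two open halfspaces bounded by $\pi$, so after replacing $L$ by $-L$ if needed I may assume $L(\vec{x})>0$ for $\vec{x}\in P_1$ and $L(\vec{x})<0$ for $\vec{x}\in P_2$. Since $P_1\cup P_2$ is finite, there is a positive rational $\epsilon$ with $|L(\vec{x})|\geq \epsilon$ on $P_1\cup P_2$, and also a constant $M$ bounding the absolute values of the base functions below on this finite set.

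For part~1, I would next produce any affine function $f_0$ on $\bb{Q}^n$ satisfying $f_0(\vec{a}_i)=p_i$ for $i=1,\ldots,n$. Such an $f_0$ exists because the general-position assumption makes $\vec{a}_1,\ldots,\vec{a}_n$ affinely independent in $\bb{Q}^n$: they span the $(n-1)$-dimensional affine subspace $\pi$, so they uniquely determine an affine function on $\pi$ with the prescribed values, and this can be extended to $\bb{Q}^n$ by any rational assignment at one auxiliary off-$\pi$ point. I would then set
$$
f(\vec{x}) := f_0(\vec{x}) + \lambda\,L(\vec{x})
$$
for a positive rational parameter $\lambda$. The interpolation conditions $f(\vec{a}_i)=p_i$ are preserved since $L$ vanishes on $\pi$, and any $\lambda > (M+\max(C_1,-C_2))/\epsilon$ forces $f>C_1$ on $P_1$ and $f<C_2$ on $P_2$ by the sign and magnitude control on $L$.

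Part~2 would follow from exactly the same perturbation, starting from $g$ instead of $f_0$: I would take $f(\vec{x}):=g(\vec{x})+\lambda L(\vec{x})$. Agreement with $g$ on $\pi$ is automatic because $L$ vanishes there, and the analogous choice of $\lambda$ (now calibrated against a bound for $|g|$ on $P_1\cup P_2$) enforces the required inequalities.

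I do not foresee a real obstacle: the claim is a routine piece of affine geometry. The only points that warrant explicit verification are that the interpolating $f_0$ exists as a \emph{rational} affine function (handled by solving a rational linear system on an affinely independent set of $\bb{Q}$-points) and that the scalar $\lambda$ can be taken in $\bb{Q}$ (which is immediate from the strict inequalities). These ensure $f \in \bb{Q}[\vec{x}]$, as needed for the subsequent tropical arguments.
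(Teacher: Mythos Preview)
Your argument is correct. The paper does not actually prove Claim~\ref{cl:geometry}; it is introduced with the sentence ``we will use the following standard facts about (classical) affine functions on $\bb{Q}^n$'' and then used without further justification. Your perturbation $f=f_0+\lambda L$ (respectively $f=g+\lambda L$) by a sufficiently large rational multiple of an affine form $L$ vanishing on $\pi$ is exactly the routine verification one would supply, and all the care points you flag---the existence of a rational interpolant $f_0$ on an affinely independent set of rational points, and the ability to choose $\lambda\in\bb{Q}$ because the required inequalities are strict and the sets $P_1,P_2$ are finite---are the right ones. One minor wording issue: your sentence about ``a constant $M$ bounding the absolute values of the base functions below'' should say that $M$ is an \emph{upper} bound for $|f_0|$ (respectively $|g|$) on $P_1\cup P_2$; with that reading your threshold for $\lambda$ is correct.
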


The proof of the theorem is by induction on $s$. The base is $s=0$. In this case one monomial is enough (and is needed since we require polynomial to be nontrivial).

Consider the convex hull of points of $S$. Take a maximal dimension face $P$ of this convex hull. If $S$ is of dimension $n$, then $P$ is $(n-1)$-dimensional and if $S$ is of dimension less than $n$ we consider $P$ to be just the convex hull of $S$. For simplicity of notation assume that the points from $S$ belonging to $P$ are $\vec{a}_1,\ldots,\vec{a}_l$. 
Consider a ($(n-1)$-dimensional) hyperplane $\pi$ passing through $\vec{a}_1,\ldots, \vec{a}_l$. Since $P$ is a face of the convex hull of $S$ all the points  in $S' = \{\vec{a}_{l+1},\ldots,\vec{a}_s\}$ lie in one (open) halfspace w.r.t. $\pi$ (if $S$ is of dimension less than $n$, then $l=s$). 

Applying the induction hypothesis we obtain a polynomial $p'(\vec{x}) = \max_i m_i'(\vec{x})$ that has roots in all the points of $S'$. For $j=1,\ldots, l$ introduce the notation $p_j = p'(\vec{a}_j) = \max_{i} m_{i}(\vec{a}_j)$.

We consider three cases: $P$ contains all the points of $S$; $P$ contains not all the points of $S$ and $l \leq n$; $P$ contains not all the points of $S$ and $l > n$.

If $P$ contains all the points of $S$, then the polynomial $p'$ is obtained from the base of induction and consists of one monomial $m_1'$. Recall, that a monomial is just an affine function on $\bb{Q}^n$. Consider a new monomial $m(\vec{x})$ such that $m(\vec{x})=m_1'(\vec{x})$ on the hyperplane $\pi$, but $m(\vec{b}) \neq m_{1}'(\vec{b})$ for some $\vec{b} \notin \pi$. Then the polynomial $p = p' \ta m$ has roots in all the points of the hyperplane $\pi$ and thus in all the points of $S$. This polynomial has $2 \leq \left\lceil \frac{2s}{(n+1)}\right\rceil + 1$ monomials.

If $P$ contains not all the points of $S$, then the dimension of $P$ is $n-1$ (indeed, otherwise $P$ is not a face).

If additionally $l \leq n$, it follows that $l=n$.
Thus $\vec{a}_1,\ldots, \vec{a}_n$ are points in the general position in $\pi$. Thus due to the claim above we can pick a new monomial $m$ such that $m(\vec{a}_j) = p_j$ for all $j=1,\ldots,l$ and $m(\vec{a}_j) < p'(\vec{a}_j)$ for all $j > l$. Then the polynomial $p = p' \ta m$ has roots in all the points of $S$. This polynomial has 
$
1 + \left\lceil \frac{2(s-n)}{(n+1)}\right\rceil + 1 \leq \left\lceil \frac{2s}{(n+1)}\right\rceil + 1
$
monomials.

Now, if $l\geq n+1$ let $p_0 = \max_{j\leq l} p_j$.
Applying the claim above take a pair of new distinct monomials $m_1$ and $m_2$ such that $m_1(\vec{x})=m_2(\vec{x})=p_0$ for all $\vec{x} \in \pi$ and  $m_1(\vec{a}_j), m_2(\vec{a}_j) < p'(\vec{a}_j)$ for all $j > l$.
Then the polynomial $p = p' \ta m_1 \ta m_2$ has roots in all the points of $S$. This polynomial has at most
$
2 + \left\lceil \frac{2(s-n-1)}{(n+1)}\right\rceil + 1 = \left\lceil \frac{2s}{(n+1)}\right\rceil + 1
$
monomials.

In all three cases we constructed a polynomial with the desired number of monomials.
\end{proof}

The construction above leaves the room for improvement. 
For example, for the case of $n=2$ we can show the following.

\begin{theorem} \label{thm:univ_dim_2_upper}
For $n=2$ we have $k(s,2) \leq \left\lceil \frac{s}{2}\right\rceil + 1$.  For the size of a minimal universal set for polynomials in $2$ variables the following inequality holds: $s \geq 2(k-1) +1$.
\end{theorem}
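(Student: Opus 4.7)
My plan is to extend the inductive argument used in the proof of Theorem~\ref{thm:upper bound on k}, adding one further case that exploits the planar ($n=2$) geometry. The induction is on $s$, with base case $s = 0$ handled by a single monomial. For the inductive step, I pick a maximal-dimension face $E$ of the convex hull of $S$ and let $l$ be the number of $S$-points on its supporting line.

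If $l = 2$, I apply the $l = n$ construction from the proof of Theorem~\ref{thm:upper bound on k}: one new monomial peels off the two endpoints of $E$, yielding the recursion $k(s, 2) \le k(s-2, 2) + 1$, which is consistent with $\lceil s/2 \rceil + 1$. If $l \ge 4$, I apply the $l \ge n+1$ construction: two new monomials agreeing on the supporting line of $E$ simultaneously make all $l$ collinear points roots, giving $k(s,2) \le k(s-l, 2) + 2$, which also fits the bound since $l \ge 4$. These two cases suffice whenever some convex hull edge has either exactly $2$ points or at least $4$ collinear points on its supporting line; the weakness of the proof of Theorem~\ref{thm:upper bound on k} comes precisely from the intermediate case $l = 3$, which produces the ratio $2{:}3$ instead of the $1{:}2$ we need.

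The genuinely new case is when every convex hull edge contains exactly $l = 3$ collinear points of $S$. Here I plan to pick two convex hull edges $E_1, E_2$ whose supporting lines $L_1, L_2$ meet at a point $V$ lying outside both edges, and to construct three new monomials $m_1, m_2, m_3$ whose upper envelope is a Y-shaped tropical curve centered at $V$ with two of its rays aligned along $L_1$ and $L_2$ in the directions containing $E_1$ and $E_2$. Concretely, setting $m_2 = m_1 + c_1 \ell_1$ and $m_3 = m_1 + c_3 \ell_2$, where $\ell_i$ is a linear form vanishing on $L_i$ and $c_1, c_3$ are nonzero rationals of appropriate sign, puts $E_1$ on the edge $\{m_1 = m_2 \ge m_3\}$ of the resulting tropical curve and $E_2$ on $\{m_1 = m_3 \ge m_2\}$; choosing $m_1$ whose restrictions to $L_1$ and $L_2$ coincide with the chords of $p'|_{L_i}$ through the hull vertices of $E_i$ (these two restrictions agree at $V$) guarantees $m_1 \ge p'$ at all six target points by convexity of $p'$ along each line, so each of them becomes a root. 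This peels off six points of $S$ using only three new monomials, giving the recursion $k(s,2) \le k(s-6, 2) + 3 \le \lceil s/2 \rceil + 1$.

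The main obstacle is ensuring simultaneously that each $m_i$ dominates $p'$ along the ray containing $E_i$ and stays strictly below $p'$ at every point of $S \setminus (E_1 \cup E_2)$; this requires a careful choice of the edge pair $E_1, E_2$ (using non-adjacency when the hull has at least four vertices, so that $\mathrm{conv}(E_1 \cup E_2)$ does not swallow other $S$-points) and a downward translation of $m_1$ to tune the second condition. The triangular convex hull sub-case, where no two edges are non-adjacent, has to be handled separately, placing $V$ at a shared hull vertex so that the Y-apex itself contributes an additional root and covers the ``shared'' vertex counted only once.
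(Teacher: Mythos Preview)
Your new case has a genuine gap. The parenthetical claim that ``these two restrictions agree at $V$'' is unjustified: the secant of $p'|_{L_1}$ through the endpoints of $E_1$, evaluated at $V$, has no reason to equal the secant of $p'|_{L_2}$ through the endpoints of $E_2$ at $V$. An affine function on $\bb{Q}^2$ has only three degrees of freedom, so you cannot in general make $m_1$ coincide with both chords.

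More seriously, even relaxing to the weaker requirement $m_1 \geq p'$ on $E_1 \cup E_2$, there is a structural obstruction you have not handled. Since $m_1 - p'$ is concave, the region $\{m_1 \geq p'\}$ is convex; once it contains $E_1 \cup E_2$ it contains all of $\conv(E_1 \cup E_2)$. With your sign choices on $c_1, c_3$, on the hull side of both $L_1$ and $L_2$ --- which is exactly where every point of $S'$ sits --- the maximum of $m_1, m_2, m_3$ is $m_1$ itself. Hence any point of $S'$ lying in $\conv(E_1 \cup E_2)$ would have $\max_i m_i(a) \geq p'(a)$ with the maximum realised by $m_1$ alone, and $a$ would fail to be a root of $p$. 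Your remedy of taking $E_1, E_2$ non-adjacent goes in the wrong direction: non-adjacent edges make $\conv(E_1 \cup E_2)$ \emph{larger} and more likely to swallow points of $S'$. A downward translation of $m_1$ helps the ``stay below $p'$ on $S'$'' condition only at the direct expense of the ``dominate $p'$ on $E_1 \cup E_2$'' condition. Your triangular sub-case also falls short arithmetically: two adjacent edges share a vertex, so you remove only five points with three monomials, which does not give the $1{:}2$ ratio when $s$ is even.

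The paper sidesteps the $l = 3$ difficulty by a much simpler device that works uniformly for all $l \geq 3$. It does not attack two edges at once; instead it augments the single edge by one extra point $a_{l+1}$, chosen as an extreme point of $S$ on the line $\pi'$ parallel to $\pi$ through the nearest remaining $S$-point. Two new monomials then suffice: $m_1$ is taken equal to $p'$ at $a_{l+1}$, decreasing along $\pi'$ toward the other $\pi'$-points, large on the $\pi$-side, and small on the far side of $\pi'$; $m_2$ agrees with $m_1$ on $\pi$ but is small on the interior side. This removes $l + 1 \geq 4$ points with $2$ monomials, giving the required recursion $k(s,2) \leq k(s-4,2) + 2$.
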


\begin{proof}
The proof of equivalence of two statements in the theorem is analogous to the proof of the similar equivalence in Theorem~\ref{thm:upper bound on k}.

For the proof of the first statement again, we use the same strategy as in the proof of Theorem~\ref{thm:upper bound on k}. We perform the same case analysis on the induction step.
Note that in the first two cases the step of induction works.

Thus, the only remaining case is $l > 2$ and $P$ contains not all the points of $S$. There is a line $\pi$ in $\bb{Q}^2$ containing points $\vec{a}_1,\ldots, \vec{a}_l$ and such that all the points in $S \setminus \{\vec{a}_1,\ldots,\vec{a}_l\}$ are in one halfspace w.r.t. $\pi$. Consider the point of $S \setminus \{\vec{a}_1,\ldots,\vec{a}_l\}$ that is the closest one to the line $\pi$. Draw the line $\pi'$ parallel to $\pi$ through this point. If there are several points of $S$ on $\pi'$ consider the one that does not lie between two others. To simplify the notation let this vertex be $\vec{a}_{l+1}$. Denote the set of remaining vertices by $S'=\{\vec{a}_{l+2},\ldots, \vec{a}_s\}$ and 
apply the induction hypothesis to $S'$. 
As before let $p_j = p'(\vec{a}_j)$.

Consider a new monomial $m_1$ (recall that the monomial is just an affine function on $\bb{Q}^2$) such that $m_1(\vec{a}_{l+1}) = p_{l+1}$, $m_1(\vec{a}_{j}) \leq p_{j}$ for all $\vec{a}_j \in S \cap \pi'$, $m_1(\vec{a}_{j}) \leq p_{j}$ for all $\vec{a}_j \in S'\setminus\pi$ and  $m_1(\vec{a}_{j}) \geq p_{j}$ for all $j\leq l$. Note that this is possible by Claim~\ref{cl:geometry} since $\vec{a}_1, \ldots, \vec{a}_l$ and $S'\setminus\pi$ are situated in the opposite halfplanes w.r.t. $\pi'$. Finally, pick yet another new monomial $m_2$ such that $m_1(\vec{x})=m_{2}(\vec{x})$ for all $\vec{x} \in \pi$ and $m_{2}(\vec{a}_j) \leq p_j$ for all $j>l$. 
Then the polynomial $p = p' \ta m_1 \ta m_2$ has roots in all the points of $S$. This polynomial has at most 
$$
2 + \left\lceil \frac{s-4}{2}\right\rceil + 1 = \left\lceil \frac{s}{2}\right\rceil + 1
$$
monomials.
\end{proof}

Later we will show that this bound is tight.

We now proceed to lower bounds on $k(s,n)$. We start with the following non-constructive lower bound.
\begin{theorem} \label{thm:univ_set_nonconstructive}
We have $k(s,n) \geq \left\lceil \frac{s}{n+1}\right\rceil$.

Equivalently, for the minimal size of the universal testing set over $\bb{Q}$ we have $s \leq k(n+1) + 1$.
\end{theorem}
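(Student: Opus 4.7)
The plan is to use a dimension count on the parameter space of tropical polynomials. Assume $k<\lceil s/(n+1)\rceil$, equivalently $s>k(n+1)$; the goal is to exhibit $S\in\bb{Q}^{ns}$ of size $s$ on which no polynomial with at most $k$ monomials vanishes. By the remark at the start of the subsection, polynomials with $k$ monomials (allowing rational exponents) form a family of dimension $k(n+1)$ parametrized by $(J_1,c_1,\ldots,J_k,c_k)\in\bb{Q}^{k(n+1)}$.

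First I would stratify the vanishing condition by \emph{combinatorial type}: a tuple $\sigma=(\sigma_1,\ldots,\sigma_s)$ with $\sigma_j\subseteq[k]$ and $|\sigma_j|\geq 2$ recording which monomials tie for the maximum at $\vec{a}_j$. For fixed $\sigma$, the type-$\sigma$ condition on $S$ is encoded by the linear equalities $c_i-c_{i'}+\langle J_i-J_{i'},\vec{a}_j\rangle=0$ for all $j$ and $i,i'\in\sigma_j$, of which exactly $E-s$ are independent, where $E=\sum_j|\sigma_j|\geq 2s$. Only finitely many types exist (at most $2^{ks}$). Then I would bound the dimension of the incidence variety $V_\sigma=\{((J,c),S)\in\bb{Q}^{k(n+1)+ns}:\text{type-}\sigma\text{ equalities hold}\}$ by projecting onto the $(J,c)$-factor: for generic $(J,c)$, the fiber over $(J,c)$ in $S$-space is cut out at each $\vec{a}_j$ by $|\sigma_j|-1$ affine-hyperplane conditions with normals $J_i-J_{i'}$ in linear general position, so the fiber has total dimension $sn-(E-s)$. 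Hence $\dim V_\sigma\leq k(n+1)+sn-(E-s)$, and projecting instead to $S$-space, the image has dimension at most $k(n+1)+sn+s-E$. Since $E\geq 2s$ and $s>k(n+1)$, we have $E\geq 2s>s+k(n+1)$, so this image has dimension strictly less than $ns$, meaning it lies in a proper subvariety of $\bb{Q}^{ns}$ defined over $\bb{Q}$.

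Finally, since there are only finitely many types $\sigma$, the union of all these bad subvarieties is contained in the zero locus of a single nonzero polynomial with rational coefficients (the product of the defining polynomials); since $\bb{Q}$ is infinite, this polynomial has a non-vanishing rational point, which supplies the required universal configuration $S$. The main technical subtlety is justifying the generic fiber-dimension count despite the bilinear term $\langle J,\vec{a}\rangle$: for fixed $(J,c)$ the defining equations become genuinely affine in $\vec{a}$, and for generic $(J,c)$ the normals $J_i-J_{i'}$ at each $\vec{a}_j$ are in general position (automatic when $|\sigma_j|\leq n+1$; types with $|\sigma_j|>n+1$ have generically empty fiber and so contribute only lower-dimensional components to $V_\sigma$, which do not affect the conclusion).
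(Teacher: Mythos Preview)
Your outline is the same dimension-counting strategy as the paper's: stratify by the combinatorial pattern of ties, bound each incidence set's dimension, project to $S$-space, and locate a rational point outside the finite union of images. The paper records only a single tying pair $(i_1,i_2)$ per point together with a coordinate $l$ at which their exponents differ, and works with semi-algebraic sets over $\bb{R}$ before descending to $\bb{Q}$ via an open-complement argument.

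There is, however, a real gap in your dimension bound. The inequality $\dim V_\sigma \leq k(n+1) + sn - (E-s)$ does not follow from the generic-fiber computation alone, because $V_\sigma$ contains degenerate components on which it fails outright. Take all $\sigma_j = \{1,2\}$: on the locus $J_1 = J_2$, $c_1 = c_2$ in $(J,c)$-space every tie equation becomes $0=0$, the fiber is all of $\bb{Q}^{sn}$, and that component of $V_\sigma$ projects onto the entire $S$-space. This is not a pathology of the problem but of your encoding --- you never imposed that the monomials indexed by $\sigma_j$ are genuinely distinct. The paper fixes exactly this by appending the inequality $J_{i_1,l} - J_{i_2,l} \geq 1$ to each configuration, which simultaneously rules out the degenerate locus and guarantees that the tie equation can be solved for the $l$-th coordinate of $\vec{a}_j$, giving an honest local-coordinate bound $k(n+1)+s(n-1)$ on the semi-algebraic dimension. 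Your final step is also too casual: images of varieties under projection are only constructible, so there are no ``defining polynomials'' whose product you could take; the paper instead invokes Tarski's theorem that projections of semi-algebraic sets remain semi-algebraic, passes to closures, and finds a rational point in the nonempty open complement.
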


\begin{proof}
Observe that two statements of the theorem are equivalent. Indeed, by our definition of $k(s,n)$ the first statement is equivalent to the fact that for any $k$ and $s$ if $k < \left\lceil \frac{s}{n+1}\right\rceil$ then there is a testing set of size $s$ for polynomials with at most $k$ monomials. The inequality in this statement can be rewritten as $s \geq k(n+1) + 1$.  The statement then is equivalent to the fact that the minimal size of a testing set $s$ for polynomials with at most $k$ monomials satisfy the inequality $s \leq k(n+1) + 1$.

Next we prove the first statement of the theorem.
Within this proof we will temporarily switch to polynomials over $\bb{R}$. We also for the sake of this proof generalize powers of monomials to be real.
Suppose for any set $S = \{\vec{a}_1,\ldots, \vec{a}_s\} \in \bb{R}^n$ there is always a polynomial with $k$ monomials that has roots in all $s$ points.

The set of all tuples $\vec{a}_1,\ldots, \vec{a}_s$ of $s$ points in $\bb{R}^n$ forms an $sn$ dimensional space over $\bb{R}$.
Suppose a polynomial $p$ with monomials $m_1,\ldots, m_k$ has roots in all the points $\vec{a}_1,\ldots, \vec{a}_s$. This means that on each point $\vec{a}_j$ there are two monomials that has two equal values. 
By a \emph{configuration} we call an assignment to each point $\vec{a}_j$ of a pair of monomials $m_{i_1},m_{i_2}$ and a coordinate $l$ such that $m_{i_1}(\vec{a}_j) = m_{i_2}(\vec{a}_j)$ and the power of $x_l$ in $m_{i_1}$ is greater than the power of $x_l$ in $m_{i_2}$ by at least $1$ (we need this to ensure that $m_{i_1}$ and $m_{i_2}$ are distinct monomials). Any configuration is given by a set of tuples $(j,i_1,i_2, p)$, where $1\leq j\leq s$, $1 \leq i_1,i_2\leq k$ and $1 \leq l \leq n$, so there are finitely many configurations. 

Consider the $(sn+k(n+1))$-dimension space formed by tuples $\vec{a}_1,\ldots, \vec{a}_s$ and $J_{1},c_{1},\ldots, J_k, c_k$, where $J_i$ is the vector of powers of $m_i$ and $c_i$ is its constant term. For each configuration we can consider a semi-algebraic set (a set given by a finite Boolean combination of algebraic equations and inequalities) given by equations $m_{i_1}(\vec{a}_j)= m_{i_2}(\vec{a}_j)$ and inequalities $J_{i_1,l} - J_{i_2,l} \geq 1$ for all tuples $(j,i_1, i_2,l)$ in the configuration. By our assumption each point $(\vec{a}_1,\ldots, \vec{a}_s)$ lies in the projection of one of these semi-algebraic sets.

Note that in each point any of these semialgebraic sets have dimension at most $k(n+1) + s(n-1)$. Indeed, we can consider the following set of local coordinates. We include in this set all coordinates of $J_{1},c_{1},\ldots, J_k, c_k$ (there are $k(n+1)$ of them). For each $a_{j}$ we can consider the corresponding tuple $(j,i_1,i_2,l)$ and include in the set of local coordinates all coordinates of $a_j$ except the $l$-th coordinate. The $l$-th coordinate can be expressed from the others via the equation $m_{i_1}(\vec{a}_j)= m_{i_2}(\vec{a}_j)$ thanks to the inequality $J_{i_1,l} - J_{i_2,l} \geq 1$.

Thus each of our semi-algebraic sets is of dimension at most $k(n+1)-s(n-1)$. By Tarski's theorem a projection of a semialgebraic set is also a semi-algebraic set (see, e.g.~\cite{BasuPR06}) and the dimension does not increase after the projection. Thus by our assumption we can cover all points $(\vec{a}_1,\ldots, \vec{a}_s)$ of $sn$-dimensional space by a finite number of dimension at most $k(n+1)-s(n-1)$. If there is an inequality
$$
k(n+1) + s(n-1) < sn
$$
between the dimensions, this is impossible,  and so there is a tuple $S$ (over $\bb{R}$) such that for any polynomial $p$ with at most $k$ monomials there is a non-root for $p$ in $S$. Our next goal is to prove that there
exists a tuple $S$ over $\bb{Q}$ satisfying the latter property.

For this, consider our semi-algebraic sets in coordinates $\vec{a}_1,\ldots, \vec{a}_s$ and consider their closures. These are still semi-algebraic sets and they are still of dimension a most $k(n+1) + s(n-1)$. So the complement of their union in $\bb{R}^{sn}$ (that is nonempty due to the inequality between dimensions) is an open set and contains each point $(\vec{a}_1,\ldots, \vec{a}_s)$ with a neighborhood. It remains to observe that this neighborhood contains a point with rational coordinates.
\end{proof}

The lower bound on $k(s,n)$ in Theorem~\ref{thm:univ_set_nonconstructive} is not constructive.
In the next section we present some constructive lower bounds. 
For this we establish a connection of our problem to certain questions in discrete geometry.

\subsection{Constructive Lower Bounds} 

Suppose for some set of points $S = \{\vec{a}_1, \ldots, \vec{a}_s\} \subseteq \bb{Q}^n$ there is a polynomial $p$ with monomials $m_1,\ldots, m_k$ that has roots in all the points of $S$.

Recall that the graph of $p$ in $(n+1)$-dimensional space is a piece-wise linear convex function. Each linear piece being a polyhedron corresponds to a monomial and roots of the polynomial are the points of non-smoothness of this function, so the roots of $p$ are the boundaries of these polyhedra.
Consider the set of all the roots of $p$ in $\bb{Q}^n$. They partition the space $\bb{Q}^n$ into at most $k$ convex (possibly unbounded) polyhedra. Each polyhedron corresponds to one of the monomials $m$ and consists of all the points
$\vec{a}\in \bb{Q}^n$ such that $m(\vec{a})=p(\vec{a})$. Note that any two of these polyhedra are \emph{separated} by a hyperplane: if the polyhedra correspond to monomials $m_i$ and $m_j$, then the first one lies in the halfspace $m_i(\vec{x}) \leq m_{j}(\vec{x})$ and the second one lies in the halfspace $m_{i}(\vec{x})\geq m_j(\vec{x})$.

Consider the polyhedron corresponding to the monomial $m_i$. Consider all the points in $S$ that lie on its boundary and consider their convex hull. We obtain a smaller (bounded) convex polyhedron that we will denote by $P_i$. 

Thus starting from $p$ we arrive at the set of pairwise separated polyhedra $P_1,\ldots, P_k$ with vertices in $S$ and not containing any points of $S$ in the interior (here we consider polyhedra in $n$-dimensional space and their $n$-dimensional interiors, that is $\vec{a}$ is in the interior if its $n$-dimensional $\eps$-neighborhood is contained in the polyhedron for small enough $\eps>0$; it might be that some polyhedra have empty interior). The statement that $p$ has roots in all the points of $S$ means that each point in $S$ belongs to at least two of the polyhedra $P_1,\ldots, P_k$. 

Motivated by this analysis we introduce the following definition. Given a set of $s$ points in $n$-dimensional space by a \emph{double covering} of points of $S$ by bounded convex polyhedra we call a collection of polyhedra $P_1,\ldots, P_k$ such that they are pairwise separated and each point in $S$ lies on the ($(n-1)$-dimensional) boundary of at least two polyhedra. Here we say that the polyhedra $P$ and $Q$ are \emph{separated} if there is a hyperplane $\pi$, such that $P$ and $Q$ lie in different closed halfspaces w.r.t. $\pi$. In particular, $P$ and $Q$ can intersect only by the points of $\pi$ and thus only by their ($(n-1)$-dimensional) boundary. 
The \emph{size} of the covering is the number $k$ of the polyhedra in it.

From the discussion above we have that if we will construct a set $S$ of points that does not have a double covering of size $k$ it will follow that $S$ is a universal set for $k$ monomials. 

The similar notion of single covering has been studied in the literature~\cite[page~367]{BMP05book}. Given a set of $s$ points in $n$-dimensional space by a \emph{single covering} of points of $S$ by bounded convex polyhedra we call a collection of polyhedra $P_1,\ldots, P_k$ they are pairwise separated and each point in $S$ lies on the ($(n-1)$-dimensional) boundary of one of the polyhedra. The \emph{size} of the single covering is the number $k$ of the polyhedra in it.  

Denote by $k_1(s,n)$ the minimal number of polyhedra that is enough  to single cover any $s$ points in $n$ dimensional space.
Denote by $k_2(s,n)$ the minimal number of polyhedra that is enough  to double cover any $s$ points in $n$ dimensional space.

The above analysis results in the following theorem.
\begin{theorem} \label{thm:covering_connection}
$k(s,n) \geq k_2(s,n) \geq k_1(s,n)$.
\end{theorem}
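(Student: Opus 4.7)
The plan is to prove the two inequalities separately; both amount to formalizing the geometric discussion that precedes the theorem statement. The second inequality $k_2(s,n) \geq k_1(s,n)$ is immediate from the definitions: any double covering of $S$ is in particular a single covering, since a point lying on the boundary of at least two polyhedra certainly lies on the boundary of at least one. Hence every $s$-point set that admits a double covering of size $k$ admits a single covering of size $k$, which gives $k_1(s,n) \leq k_2(s,n)$.

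For the main inequality $k(s,n) \geq k_2(s,n)$ I would fix an arbitrary $S = \{\vec{a}_1,\ldots,\vec{a}_s\} \subseteq \bb{Q}^n$ and, by the definition of $k(s,n)$, choose a nontrivial tropical polynomial $p = \bigoplus_{i=1}^k m_i$ with $k \leq k(s,n)$ vanishing on $S$. For each monomial index $i$ define the tropical cell
\[
Q_i \;=\; \{\vec{x} \in \bb{Q}^n : m_i(\vec{x}) = p(\vec{x})\} \;=\; \bigcap_{j \neq i} \{\vec{x} : m_i(\vec{x}) \geq m_j(\vec{x})\},
\]
which is a convex (possibly unbounded or empty) polyhedron, and set $P_i = \conv(Q_i \cap S)$. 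Then each $P_i$ is a bounded convex polyhedron with vertices in $S$; discarding the empty ones leaves at most $k$ pieces, which form my candidate covering.

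It remains to check that this collection is a double covering of $S$. For pairwise separation, $Q_i$ and $Q_j$ lie in opposite closed halfspaces cut out by the hyperplane $\pi_{ij} = \{m_i(\vec{x}) = m_j(\vec{x})\}$, and this inclusion is inherited by $P_i \subseteq Q_i$ and $P_j \subseteq Q_j$. For the double-boundary condition, a point $\vec{a} \in S$ is a root of $p$ iff the maximum $p(\vec{a})$ is attained by at least two distinct monomials $m_{i_1}, m_{i_2}$, equivalently $\vec{a} \in Q_{i_1} \cap Q_{i_2}$; thus $\vec{a}$ is a vertex of both $P_{i_1}$ and $P_{i_2}$ and in particular lies on the boundary of each. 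The only subtlety I anticipate is that some $P_i$ may be lower-dimensional, but the paper's definition of covering explicitly permits polyhedra with empty $n$-dimensional interior, and a vertex of a convex polyhedron always lies on its topological boundary in this sense, so the verification goes through without difficulty.
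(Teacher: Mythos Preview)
Your proposal is correct and follows exactly the paper's approach: the paper gives no separate proof but simply declares that ``the above analysis results in the following theorem,'' and that analysis is precisely what you have formalized.

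One small slip to fix: from $\vec{a}\in Q_{i_1}\cap Q_{i_2}$ it does \emph{not} follow that $\vec{a}$ is a vertex of $P_{i_1}=\conv(Q_{i_1}\cap S)$; for instance, three collinear points of $S$ may lie in $Q_{i_1}$, and the middle one is then not extreme. Your final conclusion that $\vec{a}$ lies on the $(n{-}1)$-dimensional boundary of both $P_{i_1}$ and $P_{i_2}$ is still correct, but the justification should go through the separating hyperplane you already identified: since $\vec{a}\in\pi_{i_1 i_2}$ and $P_{i_1}\subseteq\{m_{i_1}\geq m_{i_2}\}$, no $n$-dimensional $\eps$-ball about $\vec{a}$ can be contained in $P_{i_1}$, and symmetrically for $P_{i_2}$.
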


For single coverings the following results are known.
Let $f(n)$ be the maximal number such that any large enough $n$-dimensional set of points $S$ contains a set of $f(n)$ points that lie on the boundary of some convex polyhedron and on the other hand there are no other points in $S$ in the interior of this polyhedron. The function $f(n)$ was studied but is not well understood yet. It is known~\cite{Valtr92} that the function is at most factorial in $n$. We can however observe the following.

\begin{lemma} \label{lem:single_covering_trivial}
For large enough $s$ we have that $k_1(s,n) \geq s/f(n)$.
\end{lemma}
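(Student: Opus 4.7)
The plan is to fix a bad set $S$ of exactly $s$ points, furnished by the definition of $f(n)$, in which every convex polyhedron whose ($n$-dimensional) interior contains no point of $S$ has at most $f(n)$ points of $S$ on its boundary; a pigeonhole argument on any single covering of $S$ then yields the bound.

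First I would unpack the definition of $f(n)$: since $f(n)$ is declared the \emph{maximum} integer for which every sufficiently large $n$-dimensional set contains $f(n)$ such boundary points, the value $f(n)+1$ must fail, so arbitrarily large point sets $S$ exist in which no convex polyhedron has more than $f(n)$ points of $S$ on its boundary together with empty interior relative to $S$. For $s$ large enough I fix such a bad $S$ of size exactly $s$; the standard Horton--Valtr-type constructions in dimension $n$ (cf.~\cite{Valtr92}) are tunable to any prescribed cardinality and provide this.

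The key structural step is to show that in any single covering $P_1,\ldots,P_k$ of $S$, no point of $S$ lies in the interior of any $P_i$. If $\vec a \in S$ were in the interior of some $P_i$, an open $n$-dimensional neighborhood of $\vec a$ would be contained in $P_i$; by the hyperplane separating $P_i$ from $P_j$ (for $j\neq i$), this neighborhood is disjoint from every other $P_j$. Hence $\vec a$ would lie on the boundary of no $P_j$ at all, violating the single-covering condition.

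The rest is pigeonhole: each $P_i$ then satisfies the hypothesis in the definition of $f(n)$, so $|S\cap \partial P_i|\le f(n)$, and since the covering forces $\bigcup_i (S\cap \partial P_i)=S$, we obtain $s\le \sum_i |S\cap \partial P_i|\le k\,f(n)$, i.e.\ $k\ge s/f(n)$. The main obstacle is hence confined to the first paragraph, namely the existence of a bad $S$ of \emph{exactly} $s$ points (not merely of size $\ge s$, since $k_1$ is monotone in the wrong direction for a simple sub-sampling reduction); this is resolved by appealing to the existing combinatorial-geometric constructions, while the covering-theoretic part of the argument is essentially routine.
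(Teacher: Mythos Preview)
Your proof is correct and follows essentially the same approach as the paper's: pick a set $S$ of $s$ points (furnished by the failure of $f(n)+1$ in the definition of $f(n)$) that admits no empty convex polyhedron with more than $f(n)$ points of $S$ on its boundary, and then pigeonhole over any single covering. The paper's proof is a two-line version of yours; your added verification that no point of $S$ can lie in the interior of a covering polyhedron (via the pairwise separating hyperplanes) makes explicit a step the paper leaves implicit, and your discussion of obtaining a bad set of \emph{exactly} $s$ points is a legitimate concern that the paper simply absorbs into the phrase ``large enough $s$''.
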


\begin{remark}
We observe that our definitions of $f(n)$ and $k_1(s,n)$ slightly differ from the ones of~\cite{Valtr92} and~\cite{BMP05book}. On one hand, in~\cite{Valtr92} and~\cite{BMP05book} it is required that the points in $S$ are in the general position. On the other hand, it is required that the points lie not only on the boundary of the poyhedra, but in its vertices and polyhedra in the single covering are not allowed to intersect. However, our definitions are equivalent to the definitions of~\cite{Valtr92} and~\cite{BMP05book}. Indeed, on one hand, our notions are not more general for the case when the points in $S$ are in the general position, since we can always restrict polyhedra to their convex hulls (and in case some point is covered more than once in the covering by polyhedra, just remove it from all of the polyhedra but one). On the other hand, the same values of $f(n)$ and $k_1(s,n)$ as for the points in general position can be achieved for arbitrary set of points. Indeed, having the set $S$ of points not in the general position, we can move them slightly to make them to be in the general position, find the desired polyhedra, restrict them to the convex hulls of points they are covering and move the points back (along with their convex hulls). It is easy to see that if the movement of points was small enough the polyhedra will satisfy all the desired properties (points remain on the boundary of polyhedra and the polyhedra remain separated).
\end{remark}

\begin{proof}[Proof of Lemma~\ref{lem:single_covering_trivial}]
Consider a large enough set of $s$ points in general position with no empty polyhedra of size $f(n)+1$. Then in any covering each polyhedron can contain at most $f(n)$ points, hence the lower bound follows.
\end{proof}

It is known~\cite{Valtr92} that $f(3)\geq 22$. Thus we get that $k_1(s,3) \geq s/22$ for large enough $s$.

It is also known~\cite{Urabe99} that $\lceil s/2(\log_2 s +1) \rceil \leq k_1(s,3) \leq \lceil 2s/9 \rceil$. For $n=2$ there are linear upper and lower bounds known~\cite{Urabe96}. For an arbitrary $n$ in~\cite{Urabe99} an upper bound $k_1(s,n) \leq 2s/(2n+3)$ is shown and $k_1(s,n) = \lceil s/2n\rceil$ is conjectured.

As a trivial corollary of Lemma~\ref{lem:single_covering_trivial} we obtain the following.

\begin{corollary} \label{cor:covering1}
For large enough $s$ we have that $k(s,n) \geq s/f(n)$.
\end{corollary}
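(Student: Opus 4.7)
The plan is to simply chain together the two results established immediately before the corollary. By Theorem~\ref{thm:covering_connection}, which was derived from the geometric interpretation of monomials of a tropical polynomial $p$ as pairwise-separated polyhedra with points of $S$ on their boundaries, we have the chain of inequalities $k(s,n) \geq k_2(s,n) \geq k_1(s,n)$. So it suffices to bound $k(s,n)$ below in terms of $k_1(s,n)$ and then apply the already-proven lower bound on $k_1$.

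First I would invoke Theorem~\ref{thm:covering_connection} to get $k(s,n) \geq k_1(s,n)$, discarding the intermediate quantity $k_2(s,n)$ since only the weakest inequality is needed here. Then I would invoke Lemma~\ref{lem:single_covering_trivial}, which tells us that $k_1(s,n) \geq s/f(n)$ for large enough $s$. Composing these two inequalities yields $k(s,n) \geq s/f(n)$ for large enough $s$, which is exactly the claim.

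There is no real obstacle: the heavy lifting has already been done in establishing Theorem~\ref{thm:covering_connection} (relating roots of tropical polynomials to double coverings) and Lemma~\ref{lem:single_covering_trivial} (producing point sets with no large empty convex polyhedra). The corollary is labeled ``trivial'' in the text precisely because all that remains is a one-line chaining argument. Thus the proof is essentially a single display of the form $k(s,n) \geq k_1(s,n) \geq s/f(n)$ together with a pointer to the threshold on $s$ coming from Lemma~\ref{lem:single_covering_trivial}.
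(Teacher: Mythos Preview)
Your proposal is correct and matches the paper's approach exactly: the paper simply labels this a ``trivial corollary'' of Lemma~\ref{lem:single_covering_trivial}, and the only missing link is Theorem~\ref{thm:covering_connection}, which you supply. There is nothing more to it than the chain $k(s,n) \geq k_1(s,n) \geq s/f(n)$ you wrote down.
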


\begin{remark}
We note that although Corollary~\ref{cor:covering1} gives a lower bound on $k(s,n)$ for large enough $s$, it can be restated for all $s$. Suppose $s_0$ is the smallest $s$ for which the inequality in the lemma holds. Note that there is a trivial bound $k(s,n)\geq 1$. Consider $g(n) = \max{(f(n),s_0)}$. Then we have $k(s,n) \geq s/g(n)$.
\end{remark}

\begin{lemma} \label{lem:covering2}
$k_1((n+2)s,n) \geq k_2(s,n)$. 
\end{lemma}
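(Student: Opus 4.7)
The plan is to give a reduction: starting from a set $S = \{\vec{a}_1, \ldots, \vec{a}_s\} \subseteq \bb{Q}^n$ witnessing the lower bound $k_2(s,n)$ (so that every double covering of $S$ requires at least $k_2(s,n)$ polyhedra), I construct a set $S'\subseteq\bb{Q}^n$ of cardinality $(n+2)s$ for which every single covering requires at least $k_2(s,n)$ polyhedra. For a small $\varepsilon>0$ and each $\vec{a}_i$, choose $n+1$ rational points $\vec{b}_i^{(1)},\ldots,\vec{b}_i^{(n+1)}$ in general position forming the vertices of a simplex of diameter $O(\varepsilon)$ with $\vec{a}_i$ strictly in the interior of its convex hull, and set
\[
S'_\varepsilon = \{\vec{a}_1,\ldots,\vec{a}_s\} \cup \bigcup_{i=1}^{s} \{\vec{b}_i^{(1)},\ldots,\vec{b}_i^{(n+1)}\},
\]
so $|S'_\varepsilon|=(n+2)s$. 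Take $\varepsilon$ much smaller than the minimal pairwise distance among the $\vec{a}_i$'s, so the $s$ clusters are mutually isolated.

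The key structural observation is this: given any single covering $\{P_1,\ldots,P_k\}$ of $S'_\varepsilon$ and any $i$, the point $\vec{a}_i\in S'_\varepsilon$ lies on the boundary of some $P$, and not all of $\vec{b}_i^{(1)},\ldots,\vec{b}_i^{(n+1)}$ can simultaneously lie on $\partial P$. Indeed, if they all did, convexity of $P$ would force $\conv(\vec{b}_i^{(1)},\ldots,\vec{b}_i^{(n+1)}) \subseteq P$; but $\vec{a}_i$ lies in the relative interior of this full-dimensional simplex, whence $\vec{a}_i$ would be in the interior of $P$, contradicting $\vec{a}_i\in\partial P$. Hence some $\vec{b}_i^{(j_0)}$ lies on the boundary of a polyhedron $P'\neq P$ of the covering, and the hyperplane separating $P$ from $P'$ passes within $O(\varepsilon)$ of $\vec{a}_i$.

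To promote this approximate double coverage to an actual double covering of $S$ with the same number of polyhedra, the plan is a direct perturbation: since the separating hyperplane between $P$ and $P'$ is already within $O(\varepsilon)$ of $\vec{a}_i$, parallel-shift it by $O(\varepsilon)$ to pass through $\vec{a}_i$, and extend/trim $P$ and $P'$ along this new hyperplane so that $\vec{a}_i$ becomes a boundary point of both while $\vec{b}_i^{(j_0)}$ remains on $\partial P'$. Because the clusters are mutually isolated, modifications near different clusters can be carried out independently without colliding, and the resulting $k$ polyhedra are still pairwise separated; they form a double covering of $S$, forcing $k\geq k_2(s,n)$. An equivalent route proceeds by a compactness/limit argument: intersect every $P_j^\varepsilon$ with a large fixed bounding ball, fix (by pigeonhole) a combinatorial incidence pattern between the $k$ polyhedra and the points of $S'_\varepsilon$ that persists along a subsequence $\varepsilon_m\to 0$, and invoke Blaschke's selection theorem to extract Hausdorff limits $P_j^*$ which remain pairwise separated.

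The main technical obstacle in either route is ensuring that $\vec{a}_i$ ends up on the \emph{boundary}, not the interior, of both limiting/modified polyhedra, and that global pairwise separation survives. In the perturbation approach this is handled by keeping each modification strictly localized to an $O(\varepsilon)$-neighborhood of one cluster, disjoint from the other clusters, and using the fact that moving a separating hyperplane by $O(\varepsilon)$ cannot push it to the wrong side of any distant polyhedron. In the limit approach one must choose the fixed incidence pattern so that the separating hyperplane between $P$ and $P'$ converges to one that actually contains $\vec{a}_i$; this uses precisely the above estimate that this separating hyperplane lay within $O(\varepsilon)$ of $\vec{a}_i$ for every $\varepsilon$. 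With this, the proof is complete.
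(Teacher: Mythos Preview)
Your proposal is essentially the same construction as the paper's: replace each point of a worst-case set $S$ by a small $n$-simplex together with its center (you phrase it as keeping $\vec{a}_i$ and adding $n+1$ simplex vertices around it, which is the same thing), observe that a single convex polyhedron cannot have all $n+2$ points of a cluster on its boundary because the center would then lie in its interior, and conclude that at least two polyhedra of any single covering meet each cluster.

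The only difference is in how the final ``merge the cluster back to one point'' step is handled. The paper dispatches it in one sentence: merging the points of each simplex back into one point yields a double covering of $S$ of the same size, provided the simplices are small enough. You instead work harder, proposing either a local parallel-shift of the separating hyperplane or a Blaschke-selection limit along $\varepsilon\to 0$. Both of your routes are valid ways to make the paper's informal sentence rigorous, and your identification of the real issue (that $\vec{a}_i$ must end up on the \emph{boundary} of both polyhedra, and that pairwise separation must be preserved) is exactly right. So your argument is correct and matches the paper's approach; you simply supply more detail at the one point where the paper is terse.
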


\begin{proof}
Consider a set of $s$ points and substitute each point by the set of vertices of a small enough $n$-dimensional simplex and by its center. Thus we substitute each point by $n+2$ points and obtain $(n+2)s$ points as a result. Consider a single covering of these points of size $k_1((n+2)s,n)$. None of the polyhedra in this cover can contain the whole simplex and its center. Thus, each simplex contains vertices of at least two polyhedra. Merging all the points of each simplex back into one point results in a double covering of the original set of the same size (assuming the simplices are small enough).
\end{proof}

Overall, we have a sequence of inequalities 
$
k(s,n) \geq k_2(s,n) \geq k_1(s,n) \geq k_2(\frac{s}{n+2},n).
$
We do not know how large $k(s,n)$ can be compared to $k_1(s,n)$ and $k_2(s,n)$. 

However this connection helps us to show that the lower bound on the size of universal testing set we have established before for the case of $n=2$ is tight.

\begin{theorem} \label{thm:univ_set_dim_2}
We have $k(s,2) \geq k_2(s,2) \geq \left\lceil \frac s2 \right\rceil + 1$.

Therefore, for $n=2$ the size of the minimal universal testing set is equal to $s = 2k-1$.
\end{theorem}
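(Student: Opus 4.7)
The plan is to invoke Theorem~\ref{thm:covering_connection} to reduce the task to proving $k_2(s, 2) \ge \lceil s/2 \rceil + 1$, and then exhibit a hard configuration. I will take $S = \{(i, i^2) : 1 \le i \le s\} \subseteq \bb{Q}^2$; these points lie in strictly convex position on the parabola $y = x^2$ and no three are collinear.

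Given any double covering $\{P_1, \dots, P_k\}$ of $S$, I would first establish two structural facts. For each $j$, $V_j := S \cap \partial P_j$ in fact coincides with $S \cap P_j$, because a point of $S$ in the interior of some $P_j$ could not, by the separation of $P_j$ from every other $P_i$, lie on the boundary of any other polyhedron, contradicting the double-covering condition. Moreover, since any pair $P_i, P_j$ is separated by a line and no three points of $S$ are collinear, $|V_i \cap V_j| \le 2$; and whenever $V_i \cap V_j = \{p_a, p_b\}$, the separating line is forced to be the unique line $p_a p_b$, so $P_i$ and $P_j$ must lie strictly in opposite closed halfplanes of this chord.

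The core of the argument is an induction on $s$. The key extraction lemma, to be proved geometrically, is that any such double covering contains a polyhedron $P^*$ with $V^* \subseteq \{p_1, p_2\}$. To prove it, choose among the polyhedra covering $p_1$ one whose $V$ has the smallest maximum index, say $y$; if $y \le 2$ we are done. Otherwise, letting $p_{i_2}$ denote the second-smallest index in $V^*$ (so $i_2 \le y$), the point $p_{i_2}$ lies in another polyhedron $Q$, and the opposite-halfplane constraint on the chord through $p_1$ and $p_{i_2}$ places $V_Q \setminus \{p_1, p_{i_2}\}$ either in the \emph{inner} indices $\{2, \dots, i_2 - 1\}$ (producing a polyhedron with max-index strictly less than $y$, contradicting minimality) or in the \emph{outer} indices $\{i_2 + 1, \dots, s\}$, in which case a careful iteration of the same argument reduces to a sub-instance and finishes. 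Once $P^*$ is found, remove it together with the points of $V^*$; the remaining $k-1$ polyhedra double-cover $S \setminus V^*$, since each remaining point lies outside $P^*$ and retains all its prior covers.

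Applying the inductive hypothesis to the reduced configuration of $s - |V^*|$ parabola points yields $k - 1 \ge \lceil (s - |V^*|)/2 \rceil + 1$, and a short case check for $|V^*| \in \{1, 2\}$ (treating parities of $s$ separately) verifies $k \ge \lceil s/2 \rceil + 1$. Combined with the upper bound of Theorem~\ref{thm:univ_dim_2_upper}, this gives $k(s, 2) = \lceil s/2 \rceil + 1$; a universal testing set of size $s$ for polynomials with $k$ monomials exists iff $k < k(s, 2)$, i.e.\ $\lceil s/2 \rceil \ge k$, so the minimum such $s$ is $2k - 1$. The hardest step is the extraction lemma: the combinatorial pair-intersection bound alone is insufficient, and the opposite-halfplane constraint must be leveraged carefully to force the existence of a small polyhedron.
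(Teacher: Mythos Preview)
Your extraction lemma --- that every double covering contains a polyhedron $P^*$ with $V^*\subseteq\{p_1,p_2\}$ --- is false. For $s=6$ take
\[
P_1=\conv\{p_1,p_2,p_3,p_4\},\quad P_2=\conv\{p_1,p_4,p_5,p_6\},\quad P_3=\conv\{p_2,p_3\},\quad P_4=\conv\{p_5,p_6\}.
\]
All six pairs are separated: $P_1,P_2$ by the chord $p_1p_4$; $P_1,P_3$ by the line $p_2p_3$ and $P_2,P_4$ by the line $p_5p_6$; $P_1,P_4$ and $P_3,P_4$ by vertical lines; and $P_2,P_3$ by any secant meeting the parabola at $x$-values in $(1,2)$ and $(3,4)$. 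Each $p_i$ lies on the boundary of exactly two of the $P_j$, so this is a valid double covering with $k=4$, yet no $V_j$ is contained in $\{p_1,p_2\}$. Your sketch also breaks on this example: with $P^*=P_1$ one gets $i_2=2$; the other polyhedron $Q=P_3$ through $p_2$ shares $\{p_2,p_3\}$ with $P^*$, so the separating line between them is $p_2p_3$, not the chord $p_1p_2$ you invoke; and since $p_1\notin Q$, a small maximum index in $V_Q$ does not contradict your minimality choice, which was only among polyhedra covering $p_1$. There is nothing canonical about the pair $\{p_1,p_2\}$, so no peeling argument pinned to those two particular points can succeed.

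The paper proceeds differently. It also takes $S$ to be the vertex set of a convex polygon $M$, but instead of extracting one small polyhedron at a time it splits the covering polygons into the set $E$ of edges of $M$ and the set $T$ of all others, and proves by induction on $s$ (a polygon $P\in T$ with $r$ vertices cuts $M\setminus P$ into $r$ convex sub-polygons, to each of which the hypothesis applies) that $W:=\sum_{P\in T}|V_P|$ satisfies $W\le s+2|T|-2$. Combined with the double-cover inequality $2s\le 2|E|+W$ this gives $k=|E|+|T|\ge s/2+1$. If you want to salvage an extraction-style induction, the weakest usable statement would be that \emph{some} polyhedron has $|V|\le 2$, not that one is confined to $\{p_1,p_2\}$; but that is not what your sketch establishes, and it would require its own argument.
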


The remaining part of this section is devoted to the proof of Theorem~\ref{thm:univ_set_dim_2}.

The second part of the theorem follows from the first part and Theorem~\ref{thm:univ_dim_2_upper} immediately. 

Thus, it remains to show that $k_2(s,2) \geq \left\lceil \frac s2 \right\rceil + 1$.

As a universal set with $s$ points in $\bb{Q}^2$ we will pick the set of vertices of an arbitrary convex polygon $M$.

Suppose we have some double covering of the vertices of $M$ by $k$ polygons. Among these polygons let us distinguish the set $E$ of those that are edges of $M$ and the set $T$ of other polygons. Denote $|E|=k_1$ and $|T|=k_2$, thus $k=k_1+k_2$. 
Denote by $W$ the sum of the number of vertices in all polygons in $T$.

We will show the following lemma.

\begin{lemma} \label{lem:weight_bound}
For $s\geq 2$ we have $W \leq s+2k_2 -2$.
\end{lemma}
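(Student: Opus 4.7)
The plan is to bound $W$ by an interior-angle count on the polygons of $T$. I would first observe that for convex planar regions pairwise separation by lines is equivalent to pairwise disjoint interiors (by the separating hyperplane theorem), so the polygons of $T$ sit inside $M$ without overlapping. It is harmless to assume that the vertices of $M$ are in general position (no three collinear), since Theorem~\ref{thm:univ_set_dim_2} only needs the existence of a single hard configuration $M$; under general position, any vertex $v$ of $M$ lying on $\partial P$ for some $P\in T$ must in fact be a vertex of $P$. Let $v_P$ denote the number of vertices of $P\in T$, and let $n_f,n_s,n_p$ be the numbers of polygons in $T$ with $v_P\ge 3$, $v_P=2$ (necessarily interior chords, since the edges of $M$ go into $E$), and $v_P=1$, respectively. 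Then $k_2=n_f+n_s+n_p$ and $W=W_f+2n_s+n_p$, where $W_f:=\sum_{P\in T,\,v_P\ge 3}v_P$. The case $s=2$ is trivial, since then each $P\in T$ has at most two vertices and $W\le 2k_2=s+2k_2-2$; so I focus on $s\ge 3$.

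The core is the identity $\sum_{v\in V(M)}\alpha_v=(s-2)\pi$ for the interior angles $\alpha_v$ of a convex $s$-gon. For each $P\in T$ with $v_P\ge 3$, the interior angles of $P$ at its own vertices sum to $(v_P-2)\pi$. At a fixed vertex $v$ of $M$, every such $P$ containing $v$ has $v$ as a vertex and contributes an angular sector at $v$; these sectors are pairwise disjoint (because the polygons have disjoint interiors) and each is contained in the sector of $M$ at $v$ (because $P\subseteq M$). Hence the interior angles at $v$ of these polygons sum to at most $\alpha_v$, and summing over $v$ yields
\begin{equation*}
(W_f-2n_f)\pi=\sum_{P\in T,\,v_P\ge 3}(v_P-2)\pi\;\le\;\sum_{v\in V(M)}\alpha_v=(s-2)\pi,
\end{equation*}
which gives $W_f\le s+2n_f-2$.

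Combining, $W=W_f+2n_s+n_p\le s-2+2n_f+2n_s+n_p\le s-2+2(n_f+n_s+n_p)=s+2k_2-2$, which is the desired bound. The main subtlety is the per-vertex angle inequality when several polygons of $T$ meet at a vertex of $M$: it uses both disjoint interiors (so the sectors do not overlap) and $P\subseteq M$ (so they do not spill outside $\alpha_v$). The degenerate polygons ($v_P\le 2$) are handled trivially but must be remembered: they contribute to $k_2$ and $W$ without entering the angle sum, which is why the slack in the last step is exactly $2n_p-n_p=n_p\ge 0$.
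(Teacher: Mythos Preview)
Your argument is correct and genuinely different from the paper's. The paper proves the lemma by induction on $s$: pick any $P\in T$ with $r$ vertices; since its vertices lie among those of $M$, $P$ cuts $M$ into $r$ smaller convex polygons $M_1,\ldots,M_r$ with $\sum s_i=s+r$, the remaining members of $T$ distribute among these pieces with $\sum t_i=k_2-1$ and $\sum W_i=W-r$, and applying the inductive bound to each $M_i$ and summing yields $W\le s+2k_2-2$. Your route instead exploits the angle-sum identity: the interior angles of the $P\in T$ with $v_P\ge 3$, being contained in pairwise non-overlapping angular sectors inside the corresponding corners of $M$, sum to at most $(s-2)\pi$, which immediately gives $W_f\le s-2+2n_f$; the degenerate members of $T$ then only help. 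Your argument is shorter and more conceptual; the paper's induction is more elementary and avoids any appeal to separating hyperplanes or angular disjointness, but at the cost of tracking the combinatorics of the partition. Both arguments are intrinsically planar. Your use of general position for $M$ is indeed harmless here, since the theorem only requires exhibiting one hard $M$; it would be slightly cleaner to note that one may always replace each covering polygon by the convex hull of the vertices of $M$ lying on its boundary, so that vertices of $P$ are automatically vertices of $M$, making the general-position assumption serve only to ensure every boundary incidence is a vertex incidence.
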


First let us show why this lemma is enough to finish the proof of the lower bound on $k_2(s,2)$.

Note that each polygon from $E$ has two vertices. Thus, the sum of the number of vertices in all polygons in $E$ is $2k_1$. The sum of the number of vertices in all polygons in $T$ by definition is $W$. Each vertex of $M$ should be a vertex for at least two polygons in $E$ and $T$. Thus, the sum of the numbers of vertices in all the polygons in $E$ and $T$ is at least $2s$. 
Thus, we get that
$$
2s \leq 2k_1 + W \leq 2 k_1 + s+2k_2 -2,
$$
where the second inequality follows from Lemma~\ref{lem:weight_bound}.
From this we get 
$$
k = k_1 + k_2 \geq \frac s2 +1.
$$
Since $k$ is an integer we have $k \geq \left\lceil\frac s2 \right\rceil +1$ and the theorem follows.

Thus it remains to prove the lemma.
\begin{proof}[Proof of Lemma~\ref{lem:weight_bound}]

The proof is by induction on $s$.

The base case is $s=2$ (a degenerate polygon). Then $T = \emptyset$, $k_2=0$, $W=0$ and the inequality follows.

Consider $s\geq 3$. If $k_2=0$, then $W=0$ and the inequality obviously holds. Suppose $k_2\geq 1$ and pick an arbitrary polygon $P$ in $T$. Suppose there are $r$ vertices in $P$. Then $P$ splits the remaining part of $M$ into $r$ separate convex polygons (possibly degenerate, that is with just 2 vertices) $M_1\ldots, M_r$. Denote the number of vertices in them by $s_1,\ldots, s_r$ respectively. Note that 
\begin{equation}\label{eq: number_of_vertices}
s_1+\ldots+s_r=s+r.
\end{equation}
Suppose in polygons $M_1,\ldots, M_r$ there are $t_1,\ldots, t_r$ polygons in $T$ respectively. Denote the sets of these polygons by $T_1,\ldots, T_r$ respectively. Then
\begin{equation}\label{eq: number_of_regions}
t_1+\ldots+t_r=k_2-1.
\end{equation}
Suppose the sum of the numbers of vertices in $T_i$ is $W_i$ for $1 \leq i \leq r$. Then
\begin{equation}\label{eq: weight}
W_1+\ldots+W_r=W-r.
\end{equation}
By the induction hypothesis for any polygon $M_i$ we have the following inequality:
\begin{equation} \label{eq:induction_hypothesis}
W_i \leq s_i+2t_i -2.
\end{equation}
Adding up inequality~\eqref{eq:induction_hypothesis} for all $i=1,\ldots, r$ and using \eqref{eq: number_of_vertices}-\eqref{eq: weight} we get
$$
W-r \leq (s+r) +2(k_2-1) -2r,
$$
i.~e.
$$
W \leq s+2k_2-2
$$
and the lemma follows.
\end{proof}

\section{Tropical $\tau$-conjecture} \label{sec:tau}

Since in the max-plus semiring the distributivity holds ($a\tp (b \ta c) = a \tp b \ta a \tp c$) and since the definition of the root does not depend on the specific representation of a polynomial, we can consider representation of polynomials by arbitrary tropical formulae. Even more, we can consider its representation by \emph{tropical circuit}.

A tropical circuit $C$ in variables $x_1, \ldots, x_n$ is a directed acyclic graph each vertex of which is of in-degree $0$ or $2$. Each vertex of in-degree 0 is labeled by either a variable, or a constant in the semiring. Each vertex of in-degree $2$ is labeled by one of the operations $\ta$ or $\tp$. Labeled vertices of a circuit are called gates. Each gate computes a tropical polynomial defined inductively in the natural way. One of the gates is distinguished as the output gate. The circuit computes a polynomial that is computed by its output gate. The size of the circuit $|C|$ is the number of gates in it. 

A formula is a special case of a circuit in which every (not output) gate has out-degree 1. A standard observation is that this definition of a formula is equivalent to a common definition of a formula as an expression consisting of variables, constants, operations and brackets. 

The classical $\tau$-conjecture addresses the question of how many integer roots can a classical polynomial of one variable have in terms of the size of the minimal classical algebraic circuit computing this polynomial~\cite{Blum98}. In the tropical case however roots of any polynomial can be made integer by a simple modification of the polynomial. 

\begin{lemma} \label{lem:roots_integer}
For any tropical polynomial $p$ of one variable computable by a circuit (or a formula) of size $s$ there is a tropical polynomial $p'$ of one variable computable by a circuit (a formula) of size $s$ that has the same number of roots as $p$ and all roots of $p'$ are integer. 
\end{lemma}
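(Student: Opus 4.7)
The plan is to scale all constants appearing in the circuit by a suitable positive integer $N$, chosen to be a common denominator for the (finitely many, and necessarily rational) roots of $p$.

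First I would prove by induction on the structure of the circuit $C$ computing $p$ the following invariant: if $C'$ is obtained from $C$ by replacing the label $c$ of every constant gate with the label $Nc$ (leaving the graph structure and all variable and operation gates intact), and if $[g]$ and $[g]'$ denote the polynomials computed at gate $g$ in $C$ and $C'$ respectively, then $[g]'(x) = N \cdot [g](x/N)$. For a variable gate this reads $x = N \cdot (x/N)$; for a constant gate it holds by construction; for a $\ta$ gate it follows from $N \max(a,b) = \max(Na,Nb)$ since $N>0$; and for a $\tp$ gate it follows from $N(a+b) = Na + Nb$. Applied to the output gate this yields $p'(x) = N \cdot p(x/N)$. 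Since rescaling the argument of a tropical polynomial by a positive factor preserves which pair(s) of monomials attain the maximum at the scaled point, the root sets are related by $\{\text{roots of } p'\} = \{Nr : r \text{ is a root of } p\}$, and in particular the number of roots is preserved.

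Second, I would check that every root of $p$ lies in $\bb{Q}$. Writing $p$ in monomial form as $\max_i(c_i + a_i x)$ with $c_i \in \bb{Q}$ and $a_i \in \bb{Z}_{\geq 0}$, every root arises from an equation $c_i + a_i x = c_j + a_j x$ with $a_i \neq a_j$, hence has the form $(c_j - c_i)/(a_i - a_j) \in \bb{Q}$. Because there are only finitely many roots, one can take $N$ to be a common denominator of all of them, which forces every $Nr$ to be an integer.

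Finally, the construction $C \mapsto C'$ preserves the underlying directed acyclic graph, so $|C'|=|C|=s$, and if $C$ is a formula (every non-output gate has out-degree one) then so is $C'$. The main (mild) obstacle is verifying that the scaling invariant interacts correctly with the variable gate, but since the substitution $y=x/N$ already absorbs the factor $N$ there, the induction goes through cleanly; everything else is a direct verification using the two semiring identities recorded above.
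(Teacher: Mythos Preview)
Your scaling argument is exactly the second half of the paper's proof, and it is correct as far as it goes. The gap is in the sentence ``Writing $p$ in monomial form as $\max_i(c_i + a_i x)$ with $c_i \in \bb{Q}$ \ldots''. The paper works over a semiring $\bb{K}$ that is allowed to be $\bb{R}$, so the constants labelling the input gates of $C$---and hence the coefficients $c_i$ of the computed polynomial---may be irrational. In that case the roots $(c_j-c_i)/(a_i-a_j)$ need not lie in $\bb{Q}$, and no integer $N$ will send them all to $\bb{Z}$. Your argument, as written, only establishes the lemma when $\bb{K}=\bb{Q}$.

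The paper closes this gap with an extra preliminary step: treating the constants $a_1,\ldots,a_k$ of $C$ as fresh variables $c_1,\ldots,c_k$, it writes down a finite system of \emph{rational} linear inequalities in the $c_j$ expressing exactly which monomial of each coefficient-polynomial is maximal and, for each pair of linear pieces of the output, whether their intersection point lies above or below every other piece. This system encodes the combinatorial root structure of $p$; it is satisfied by the original (possibly irrational) tuple $(a_1,\ldots,a_k)$, hence by some rational tuple as well, and substituting that rational tuple into $C$ yields a circuit of the same size whose output has the same number of roots but now rational coefficients. Only after this rationalisation step does the paper apply your scaling argument. You need either to add such a step or to state explicitly that you are assuming $\bb{K}=\bb{Q}$.
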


\begin{proof}
Consider a tropical circuit $C$ of size $s$ computing the tropical polynomial $p(x)$ of one variable $x$. We first show that there is a tropical circuit of size $s$ computing a troipcal polynomial with the same number of roots such that all constants used in the circuit are rational.

Consider all constants $a_1,\ldots, a_k$ used in $C$ and substitute them by fresh formal variables $c_1,\ldots, c_k$. We are going to construct a system of linear inequalities with rational coefficients on $c_1,\ldots, c_k$ that reflects that the root structure of the polynomial (in variable $x$) computed by the circuit is the same as for $p$. We then observe that this system has rational solution.

We can view the output of the circuit as a tropical polynomial over $x$ which coefficients are tropical polynomials over $c_1,\ldots, c_k$ (basically, we are considering the decomposition over the variable $x$ of the polynomial over the variables $x, c_1, \ldots, c_k$). That is, each monomial of this polynomial over $x$ is $b_i \cdot x + q_i(c_1,\ldots, c_k)$ for $i=1,\ldots, m$, some integers $b_i$'s as tropical exponents of $x$ and some tropical polynomials $q_i$'s as coefficients. For each $q_i(c_1,\ldots,c_k)$ consider its monomial $l_i(c_1,\ldots, c_k)$ on which the minimum of $q_i$ is attained in the point $(a_1,\ldots, a_k)$. Add to our system of inequalities all inequalities stating that $l_i(c_1,\ldots, c_k)$ is less or equal that each of the other monomials of $q_i$. Since each monomial is a linear form with integer coefficients, each inequality is a linear inequality with integer coefficients.

Next, consider linear forms 
$$
g_i(x, c_1, \ldots, c_k)=b_i x + l_i(c_1,\ldots, c_k)
$$ 
for $i=1,\ldots, m$ with integer coefficients. For $(c_1,\ldots, c_k) = (a_1,\ldots, a_k)$ these expressions in variable $x$ form linear pieces of the graph of the function computed by the circuit. For each pair of forms $g_i$ and $g_j$ we have that either intersection point of $g_i(x, a_1, \ldots, a_k)$ and $g_j(x, a_1, \ldots, a_k)$ (as linear functions in one variable $x$) lies below some linear function $g_{i'}(x,a_1,\ldots, a_k)$, and then this point is not a root of the output of the circuit $C(x)$, or the intersection point lies above (or lies on) all other linear functions $g_{i'}(x,a_1,\ldots, a_k)$ and then it is a root of $C(x)$. We add all these relations between all triples of linear forms $g_i(x,c_1,\ldots, c_k)$, $g_j(x,c_1,\ldots, c_k)$ and $g_{i'}(x,c_1,\ldots, c_k)$. Each of these relations can be clearly expressed as a linear inequality in variables $c_1,\ldots, c_k$ with rational coefficients. Indeed, the intersection point of $g_i$ and $g_j$ has $x$ coordinate
$$
x= \frac{l_i(c_1,\ldots, c_k) - l_j(c_1,\ldots, c_k)}{b_j-b_i}.
$$
Substituting it into $g_i$ and $g_{i'}$ and fixing an inequality $\leq$ or $\geq$ between them depending on which one should be above the other we obtain the desired linear inequality in $c_1,\ldots, c_k$.

Overall, we obtain the system of linear inequalities with rational coefficients with variables $(c_1,\ldots, c_k)$ such that if some vector  $(c_1,\ldots, c_k)\in \bb{K}^k$ satisfies them, the function computed by $C(x)$ with constants $(c_1,\ldots, c_k)$ has the same number of roots as $p$. This linear system has a solution: $(c_1,\ldots, c_k)=(a_1,\ldots, a_k)$. Thus, it has a rational solution as well. Substitute this rational solution as constants in $C$.

Observe that once all coefficients in the polynomial of one variable are rational, the roots are rational as well (as intersection points of two linear functions with rational coefficients).

Finally, consider a tropical circuit $C$ computing the polynomial $p$ and construct a new circuit $C'$ of size $s$ that differs from $C$ in that every constant used in $C$ is multiplied by the same factor $\alpha$. Denote by $p'$ the polynomial computed by $C'$. Then we claim that for any $x$ 
\begin{equation} \label{eq:roots_integer}
p'(\alpha \cdot x)=\alpha \cdot p(x).
\end{equation}
In particular $a$ is a root of $p$ iff $\alpha \cdot a$ is a root of $p'$.

The proof of~\eqref{eq:roots_integer} is by simple induction on the size of the circuit: the equation is trivial for variables and constant and all operations allowed in the circuit preserve the equation.

To finish the proof of the lemma, consider a circuit $C$ with rational coefficients and multiply all constants in it by a suitable factor to make all roots integer.
\end{proof}

By Lemma~\ref{lem:roots_integer} studying the number of integer roots of tropical formulae and circuits is equivalent to studying the number of arbitrary roots in them.

Let $\# f$ denote the number of roots of a tropical univariate polynomial~$f$.

\begin{lemma}
For any tropical univariate polynomials $f$ and $g$ we have
\begin{itemize} \label{lem:formula_props}
\item $\# f\ta g \le \# f + \# g +1$;
\item $\# f\tp g \le \# f + \# g$;
\item $\# f^{\tp k} = \# f$.
\end{itemize}
\end{lemma}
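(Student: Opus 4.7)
The plan is to work geometrically, viewing each tropical univariate polynomial $f(x)=\max_i(a_i x + b_i)$ as a convex piecewise-linear function whose breakpoints are exactly its roots. If $f$ has $p=\#f$ roots, then the ordered slopes $a_{i_1}<a_{i_2}<\cdots<a_{i_{p+1}}$ of its $p+1$ linear pieces are precisely the exponents of those monomials of $f$ that actually contribute to the maximum; recording $f$ in this reduced form will let me ignore ``hidden'' monomials without loss of generality.

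For the first bound, I will use the fact that $f\ta g=\max(f,g)$ is again convex piecewise linear and its set of active slopes is contained in the union of active slopes of $f$ and of $g$. Hence $f\ta g$ has at most $(\#f+1)+(\#g+1)$ linear pieces, so at most $\#f+\#g+1$ breakpoints, giving
\[
\#(f\ta g)\le \#f+\#g+1.
\]

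For the second bound, $f\tp g = f+g$ is the pointwise sum of two convex piecewise linear functions. Since both $f'$ and $g'$ are monotone nondecreasing step functions, any jump in $(f+g)'$ occurs only where either $f'$ or $g'$ jumps, and (crucially) jumps cannot cancel because they are all nonnegative. Therefore the set of non-smoothness points of $f+g$ is contained in the union of those of $f$ and $g$, which yields
\[
\#(f\tp g)\le \#f+\#g.
\]

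For the third bound, $f^{\tp k}=k\cdot f$ is the pointwise multiplication of $f$ by the positive integer $k$, which is an affine rescaling of the graph in the vertical direction; this leaves the set of breakpoints unchanged, so $\#(f^{\tp k})=\#f$. None of the three steps presents a real obstacle; the only point requiring a little care is the second one, where I must invoke convexity (monotonicity of the derivative) to rule out cancellation of slope jumps when adding $f$ and $g$.
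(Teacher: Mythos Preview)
Your argument is correct and essentially identical to the paper's: both proofs view $f$ as a convex piecewise-linear function, count linear pieces (equivalently, active slopes) for $f\ta g=\max(f,g)$, observe that non-smoothness points of $f\tp g=f+g$ lie among those of $f$ and $g$, and note that $f^{\tp k}=k\cdot f$ has the same breakpoints as $f$. One minor remark: in the second item your ``crucially, jumps cannot cancel'' clause is not actually needed for the stated inequality, since the containment \{breakpoints of $f+g$\} $\subseteq$ \{breakpoints of $f$\} $\cup$ \{breakpoints of $g$\} already follows from the trivial fact that if both $f$ and $g$ are smooth at a point then so is $f+g$; non-cancellation would only be relevant for the reverse inclusion.
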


\begin{proof}
Recall that a tropical polynomial of one variable is a piece-wise linear convex function on the set $\bb{R}$ and the roots of the polynomial are the non-smoothness points of this function, that is the number of linear pieces minus 1.

Note that $f\ta g$ is just $\max{(f,g)}$ in classical terms, so we have that $f \ta g$ can have as its linear pieces only the parts of linear pieces of $f$ and $g$. Thus, the number of linear pieces of $f\ta g$ is at most the sum of the number of linear pieces of $f$ and $g$ and the inequality for the number of roots follows.

Note that $f \tp g$ is just $f+g$ in classical terms. So, each point of non-smoothness of $f \tp g$ must be a point of non-smoothness of at least one of the functions $f$ and $g$. So the inequality for the number of roots follows.

Finally, observe that $f^{\tp k}$ is just $k\cdot f$ in the classical terms and this function has exactly the same set of non-smoothness points.
\end{proof}

\begin{lemma} \label{lem:formula}
If a polynomial $f$ is given by a formula $C$ then $\# f \leq |C|$.
\end{lemma}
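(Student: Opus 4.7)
The plan is to prove this by structural induction on the formula $C$, using Lemma~\ref{lem:formula_props} as the engine. The crucial point that makes this work for formulas (as opposed to general circuits) is that every non-output gate has out-degree $1$, so the formula is really a tree: if the output gate is binary with inputs $C_1$ and $C_2$, the sub-formulas $C_1$ and $C_2$ are disjoint and satisfy $|C| = |C_1| + |C_2| + 1$.

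For the base case, if $|C|=1$ then $C$ is a single leaf labeled by a variable or a constant. In either case $f$ is an affine function, hence a smooth piecewise-linear function with a single linear piece, so $\# f = 0 \leq 1 = |C|$.

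For the inductive step, write $f = f_1 \circ f_2$ where $\circ \in \{\ta, \tp\}$ is the label of the output gate and $f_i$ is computed by the sub-formula $C_i$. By the induction hypothesis, $\# f_i \leq |C_i|$ for $i=1,2$. If $\circ = \ta$, then by the first item of Lemma~\ref{lem:formula_props},
\[
\# f \leq \# f_1 + \# f_2 + 1 \leq |C_1| + |C_2| + 1 = |C|.
\]
If $\circ = \tp$, then by the second item of Lemma~\ref{lem:formula_props},
\[
\# f \leq \# f_1 + \# f_2 \leq |C_1| + |C_2| < |C|.
\]
In both cases $\# f \leq |C|$, completing the induction.

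There is no real obstacle here; the proof is essentially an accounting argument. The only subtlety worth flagging explicitly is why the argument fails for circuits: if a gate has out-degree greater than $1$, its output is re-used and the decomposition $|C| = |C_1| + |C_2| + 1$ is no longer valid (the sub-circuits share gates), so the inductive bookkeeping breaks down. This is precisely the gap that Theorem~\ref{thm:tropical_circuits} will exploit to build tropical circuits of linear size with exponentially many roots.
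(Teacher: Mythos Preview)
Your proof is correct and follows exactly the approach of the paper: a straightforward structural induction on the formula, with the inductive step driven by Lemma~\ref{lem:formula_props}. The paper itself only sketches this (``a trivial induction on the size of the formula''), so your write-up simply fills in the details the paper omits.
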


\begin{proof}
The proof of this lemma is a trivial induction on the size of the formula. The step of induction easily follows from Lemma~\ref{lem:formula_props}.
\end{proof}

Thus we have shown that tropical polynomials computable by polynomial size formulae have at most polynomially many roots (and thus at most polynomially many integer roots).

\begin{remark}
Note that Lemma~\ref{lem:formula} extends to the setting in which there are exponentiation gates in the formula that do not add to the size of the circuit.
\end{remark}

Now we proceed to the case of max-plus polynomial circuits. It turns out that the answer to the question here is opposite (with respect to formulae) and we will construct an example of a circuit with exponentially many integer roots. To do this it is convenient to extend the notion of tropical polynomials and consider tropical rational functions.

For this we introduce operation of tropical division: for $x, y \in \bb{K}$ let
$$
x \td y= x-y.
$$

\begin{definition}
A function $f\colon \bb{K}^n \to \bb{K}$ is a tropical rational function if it can be expressed as a well-formed formula with variables $x_1,\ldots, x_n$, constants in $\bb{K}$ and operations $\ta, \tp$ and $\td$.
\end{definition}

The next two lemmas are not new~\cite{CuninghameG80,Ovchinnikov02}, but we present the proofs for the sake of completeness.
\begin{lemma} \label{lem:rational_simple}
Any non-trivial tropical rational function is a piece-wise linear function.
\end{lemma}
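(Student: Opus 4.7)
The plan is to proceed by structural induction on the formula defining the tropical rational function. The statement to establish is that every function expressible using variables, constants, and the operations $\ta$, $\tp$, $\td$ admits a finite subdivision of $\bb{K}^n$ into (rational) polyhedral pieces on each of which the function agrees with an affine function (in the classical sense). This is exactly the standard notion of a piece-wise linear function.

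The base case is immediate: a variable $x_i$ is already an affine function on $\bb{K}^n$, and a constant $c \in \bb{K}$ is trivially affine. For the inductive step I would show that the class of piece-wise linear functions is closed under each of the three operations. If $f, g$ are piece-wise linear, pick common refinements of their polyhedral subdivisions, so that on each cell $P$ of the common refinement both $f$ and $g$ are affine, say $f|_P = L_f$ and $g|_P = L_g$. Then on $P$ one has $(f\tp g)|_P = L_f + L_g$ and $(f\td g)|_P = L_f - L_g$, both affine, so $f\tp g$ and $f \td g$ are piece-wise linear on the same subdivision. For $f \ta g = \max(f,g)$ one further subdivides each cell $P$ into two subcells by the hyperplane $\{L_f = L_g\}$, on which $\max(f,g)$ coincides with either $L_f$ or $L_g$; since each subcell is again polyhedral, the resulting function is piece-wise linear.

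The main thing to be careful about is that at every step the subdivision remains a finite polyhedral subdivision of $\bb{K}^n$; this is automatic because common refinements of finite polyhedral subdivisions are still finite polyhedral subdivisions, and intersecting cells with a single halfspace preserves polyhedrality. No obstacle of substance arises, since $\bb{K}$ is $\bb{R}$ or $\bb{Q}$ and affine functions with coefficients in $\bb{K}$ are closed under the operations of adding, subtracting, and taking maxima on polyhedral pieces. The induction then yields the claim for any well-formed formula, and hence for any tropical rational function $f$.
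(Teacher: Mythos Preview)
Your proposal is correct and follows essentially the same approach as the paper: both argue by structural induction on the formula, checking that variables and constants are (piece-wise) affine and that each of the operations $\ta$, $\tp$, $\td$ preserves piece-wise linearity. The paper's proof is a single sentence to this effect, whereas you have spelled out the common-refinement argument in more detail; there is no substantive difference.
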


\begin{proof}
The statement of the lemma is true for variables and constant and piece-wise linearity is clearly preserved under the operations $\ta, \tp$ and $\td$.
\end{proof}

A point $\vec{a}\in \bb{K}^n$ is a \emph{root} of the tropical rational function $f$ if it is the point of non-smoothness of $f$, that is if $p$ belongs to at least two linear pieces of $f$.

\begin{remark}
We note that for the case of tropical rational functions of one variables there is usually a distinction between points of non-smoothness in which the change of slope
is positive and points in which the change of slope is negative. The former are usually called roots, and the latter are called poles (see e.g.~\cite{HalburdS09}). This distinction is not important for us, so we prefer to use the word `roots' for both cases.
\end{remark}

It is not hard to see that tropical rational functions can be expressed as a tropical division of two tropical polynomials.

\begin{lemma} \label{lem:rational_normal_form}
For any tropical rational function $f$ (with arbitrary number of variables) there are tropical polynomials $p$ and $q$ such that
$$
f = p \td q.
$$

For tropical rational function $f$ of one variable for any root $a$ of $f$ consider the intervals $(b,a)$ and $(a,c)$ on which $f$ is linear. If the slope of $f$ on $(a,c)$ is greater than the slope on $(b,a)$, then $a$ is a root of $p$. If on the other hand the slope of $f$ on $(a,c)$ is smaller than the slope on $(b,a)$, then $a$ is a root of $q$.
\end{lemma}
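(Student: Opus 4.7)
The plan is to prove both parts by structural induction on the formula expressing $f$ as a well-formed combination of variables, constants, and the operations $\ta, \tp, \td$.

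For the first part (existence of the normal form $f = p \td q$), the base cases are immediate: if $f = x_i$ or $f = c$, take $p = f$ and $q = 0$ (the tropical multiplicative unit). For the inductive step, suppose $f_1 = p_1 \td q_1$ and $f_2 = p_2 \td q_2$ with $p_i, q_i$ tropical polynomials. I would verify the three identities
\begin{align*}
f_1 \tp f_2 &= (p_1 \tp p_2) \td (q_1 \tp q_2),\\
f_1 \td f_2 &= (p_1 \tp q_2) \td (q_1 \tp p_2),\\
f_1 \ta f_2 &= \bigl((p_1 \tp q_2) \ta (p_2 \tp q_1)\bigr) \td (q_1 \tp q_2),
\end{align*}
where the last identity rests on the classical equality $\max(a-b,c-d) = \max(a+d, b+c) - (b+d)$, valid because we may add $b+d$ freely to both arguments of the maximum. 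In each case the resulting numerator and denominator are tropical polynomials since the class of tropical polynomials is closed under $\ta$ and $\tp$.

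For the second part (root classification in the univariate case), I would use convexity. A univariate tropical polynomial is a convex piece-wise linear function, so for any point $a$ the slope change $\Delta p(a) := p'(a^+) - p'(a^-)$ is nonnegative, and $a$ is a root of $p$ iff $\Delta p(a) > 0$; the same holds for $q$. Since $f = p - q$ (in classical notation), we have $\Delta f(a) = \Delta p(a) - \Delta q(a)$. If $a$ is a root of $f$ at which the slope increases (i.e.\ $\Delta f(a) > 0$), then $\Delta p(a) > \Delta q(a) \geq 0$, so $\Delta p(a) > 0$ and $a$ is a root of $p$. If instead $\Delta f(a) < 0$, then $\Delta q(a) > \Delta p(a) \geq 0$, so $a$ is a root of $q$.

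I do not expect any serious obstacle. The only mildly delicate point is the $\ta$-identity in the inductive step, which must be stated and verified carefully so that the normal-form construction composes correctly across all three operations; everything else is routine convex-analysis bookkeeping.
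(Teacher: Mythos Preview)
Your proposal is correct and follows essentially the same approach as the paper: the first part is proved by the identical structural induction with the same three identities, and the second part uses convexity of tropical polynomials in the same way, only phrased as a direct slope-change computation $\Delta f(a)=\Delta p(a)-\Delta q(a)$ rather than the paper's argument by contradiction.
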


\begin{proof}
The proof of the first statement of the lemma is by the simple induction.

If $f$ is a variable or a constant then just let $p=f$ and $q=0$.

If $f$ is obtained by one of the operations from tropical rational functions $f_1$ and $f_2$ we can prove the statement of the lemma just translating usual operations with fractions to tropical setting. More specifically, consider tropical polynomials $p_1, p_2, q_1, q_2$ such that $f_1 = p_1 \td q_1 = p_1 - q_1$ and $f_2 = p_2 \td q_2 = p_2 - q_2$.

If we have that $f = f_1 \tp f_2$, then
$$
f = f_1 + f_2 = p_1 - q_1 + (p_2 - q_2) = (p_1 + p_2) - (q_1 + q_2)
$$
and we can let $p = p_1 \tp p_2$ and $q = q_1 \tp q_2$.

If $f = f_1\td f_2$, then analogously we can let $p = p_1 \tp q_2$ and $q = q_1 \tp p_2$.

If $f = f_1 \ta f_2$, then we have
\begin{align*}
f &= \max{(f_1, f_2)} = \max{(p_1 - q_1, p_2 - q_2)} \\
&= \max{\left(p_1 + q_2 - (q_1 + q_2), p_2 + q_1 - (q_1 + q_2)\right)}\\
& = \max{(p_1 + q_2, p_2 + q_1)} - (q_1 + q_2)
\end{align*}
and we can let $p = p_1 \tp q_2 \ta p_2 \tp q_1$ and $q = q_1 \tp q_2$.

For the second part of the proof we argue by a contradiction. Assume that the slope of $f$ on $(a,c)$ is greater than the slope of $f$ on $(b,a)$, but $a$ is not a root of $p$. Then, for small enough $\eps$ we have that on $(a -\eps, a+\eps)$ the function $p$ is linear. Note however, that on this interval $q$ is convex and $f$ is concave. This contradicts equation $f = p - q$. The case when the slope of $f$ on $(a,c)$ is smaller than the slope of $f$ on $(b,a)$ is completely analogous.
\end{proof}

\begin{remark}
We note that the representation of the tropical rational function $f$ (with arbitrary number of variables) in the form as in Lemma~\ref{lem:rational_normal_form} is not unique.
\end{remark}

\begin{remark}
We note that it is not hard to show that if in Lemma~\ref{lem:rational_simple} we additionally assume that $f$ is continuous and all slopes of $f$ are integer, then the converse is also true. That is, any continuous piecewise linear function with integer slopes can be expressed as a difference of tropical polynomials (see e.g.~\cite{CuninghameG80,Ovchinnikov02}).
\end{remark}


Analogously to tropical circuits we can introduce \emph{rational tropical circuits}. The only difference is that now the operation $\td$ is also allowed and thus the circuit computes a tropical rational function. 

There is a close connection between tropical rational circuits and tropical circuits.

\begin{lemma} \label{lem:rational_to_poly}
Suppose a tropical rational circuit $C$ computes a tropical rational function $f$. Then there are tropical polynomials $p$ and $q$ such that $f = p \td q$ and $p $ and $q$ can be computed by tropical circuits (without $\td$ operation) of size at most $4 |C|$.
\end{lemma}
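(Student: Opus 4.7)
The plan is to make the existence argument in the proof of Lemma~\ref{lem:rational_normal_form} quantitative. I would process the gates of the circuit $C$ in topological order and attach to every gate $v$ (computing the rational function $f_v$) a pair of tropical polynomials $(p_v, q_v)$ with $f_v = p_v \td q_v$, maintaining simultaneously a single shared tropical circuit $C'$ (without $\td$) in which both $p_v$ and $q_v$ appear as internal gates. At a leaf of $C$ labeled by a variable $x_i$ or a constant $c$, I would take $p_v$ to be that same leaf and $q_v$ to be a constant $0$-gate, contributing at most two new gates.

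For an internal gate $v$ with children $u$ and $w$ whose pairs $(p_u,q_u),(p_w,q_w)$ are already realized inside $C'$, I would use exactly the translations from the proof of Lemma~\ref{lem:rational_normal_form}: if $v$ is a $\tp$-gate, set $p_v = p_u \tp p_w$ and $q_v = q_u \tp q_w$, adding $2$ new gates; if $v$ is a $\td$-gate, set $p_v = p_u \tp q_w$ and $q_v = q_u \tp p_w$, adding $2$ new gates; and if $v$ is a $\ta$-gate, set $p_v = (p_u \tp q_w) \ta (p_w \tp q_u)$ and $q_v = q_u \tp q_w$, adding $4$ new gates. Since the subcircuits computing $p_u, q_u, p_w, q_w$ are already present inside $C'$, we pay only for the newly introduced top gates.

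Summing over the gates of $C$ yields $|C'| \le 4|C|$, so the subcircuits for $p = p_{v_0}$ and $q = q_{v_0}$ rooted at the output gate $v_0$ of $C$ are each of size at most $4|C|$, and $f = p \td q$ by induction on the topological order. The main delicate point is to keep the argument at the level of the shared DAG rather than unwinding it into a formula: if one expanded every gate into a formula the size could blow up exponentially (because of the $\ta$-rule, which uses $q_u$ and $q_w$ together with $p_u$ and $p_w$), but since the original circuit is a DAG each of $p_v$ and $q_v$ is constructed once and reused by every later gate that depends on $v$. Beyond this bookkeeping there is nothing further to verify, since the correctness of the three local identities above was already established in the proof of Lemma~\ref{lem:rational_normal_form}.
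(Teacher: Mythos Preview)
Your proposal is correct and follows essentially the same approach as the paper: process the gates in topological order, attach to each gate a numerator/denominator pair using the three local identities from Lemma~\ref{lem:rational_normal_form}, and observe that each original gate is simulated by at most four new gates in a shared DAG. Your additional remark about the necessity of reusing subcircuits (rather than unfolding to a formula) is a useful clarification that the paper leaves implicit.
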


\begin{proof}
Each gate $g$ of $C$ computes some tropical rational function $f_g$. A simple inductive argument shows that we can introduce $p_g$ and $q_g$ such that $f_g = p_g \td q_g$ and reconstruct a circuit in such a way that for each gate the circuit computes $p_g$ and $q_g$ and the circuit does not use $\td$ operation. 

Indeed, this is trivial for input gates. For the step of induction consider a gate $g$ and assume that the statement is established for all previous gates. The gate $g$ has two inputs $h_1$ and $h_2$. By induction hypothesis in the reconstructed circuit we have gates $p_{h_1}$, $q_{h_1}$, $p_{h_2}$ and $q_{h_2}$ such that $h_1= p_{h_1} \td q_{h_1}$ and $h_2=p_{h_2} \td q_{h_2}$. To construct $p_g$ and $q_g$ we can just use simulations of operations with rational functions from the proof of Lemma~\ref{lem:rational_normal_form}. For example, if $g = h_1 \ta h_2$ we can immediately set $q_{g} = q_{h_1} \tp q_{h_2}$. To compute $p_g$ we introduce intermediate gates $g_1 = p_{h_1} \tp q_{h_2}$ and $g_2 = p_{h_2} \tp q_{h_2}$. Then $p_{g} = g_1 \ta g_2$. The cases $g = h_1 \tp h_2$ and $g = h_1 \td h_2$ are even simpler. Note that to simulate each gate of the original circuit at most four operations $\ta$ and $\tp$ are required.
\end{proof}

Now we are ready to provide an example of tropical rational functions in one variable that can be computed by small circuits and on the other hand have many roots. This example is an adaptation of the construction from~\cite{montufar2014}.

Consider 
\begin{equation} \label{eq:rational_example_0}
f_0(x)= \max(-2x+1, 2x - 1).
\end{equation}
For $i=1,2, \ldots$ define the function iteratively:
\begin{equation}\label{eq:rational_example}
f_{i} =f_{0}\circ f_{i-1} = \max(-2f_{i-1} + 1, 2f_{i-1} - 1).
\end{equation}
Note that $f_0, f_1, \ldots$ are tropical rational functions.

\begin{lemma}
The function $f_n$ can be computed by a rational tropical circuit of size $O(n)$.
\end{lemma}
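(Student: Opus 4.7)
The plan is to prove this by induction on $n$, with the key observation being that we are allowed to use circuits (directed acyclic graphs), not formulae, so that subcircuits may be shared among multiple parent gates. This sharing is exactly what prevents the size from doubling at each composition step.

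For the base case, I would exhibit an explicit constant-size rational tropical circuit for $f_0$. Rewriting~\eqref{eq:rational_example_0} in semiring notation gives
\[
f_0(x) \;=\; \bigl(1 \td (x \tp x)\bigr) \,\ta\, \bigl((x \tp x) \td 1\bigr),
\]
which can be computed by a constant number $c_0$ of gates: one $\tp$-gate for $x \tp x$ (whose output has fan-out $2$), two $\td$-gates, and one $\ta$-gate, together with the input gates for $x$ and the constant $1$.

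For the inductive step, suppose that $f_{i-1}$ is computed by a rational tropical circuit $C_{i-1}$ of size at most $c_0 + c \cdot (i-1)$ with designated output gate $v_{i-1}$. To build $C_i$, I would keep $C_{i-1}$ intact and add on top of it the same gadget used for $f_0$, but with the variable input replaced by the output $v_{i-1}$. Concretely, I append one $\tp$-gate computing $v_{i-1} \tp v_{i-1}$ (using two edges from $v_{i-1}$), two $\td$-gates with the constant $1$, and one $\ta$-gate for the final $\max$. This adds a constant number $c$ of new gates, so the new circuit has size at most $c_0 + c \cdot i$. By definition~\eqref{eq:rational_example} the output of $C_i$ is $f_0 \circ f_{i-1} = f_i$.

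The only point that could conceivably fail is the sharing of the subcircuit $C_{i-1}$: in a formula each occurrence of $f_{i-1}$ in the definition of $f_0 \circ f_{i-1}$ would have to be realized by a separate copy, yielding size $2^{\Omega(n)}$. In the circuit model, however, the single output gate $v_{i-1}$ may have arbitrary fan-out, and in our gadget $v_{i-1}$ is only used as the two inputs to a single $\tp$-gate, so one shared copy of $C_{i-1}$ suffices. Iterating $n$ times yields a rational tropical circuit of size $c_0 + c \cdot n = O(n)$ computing $f_n$.
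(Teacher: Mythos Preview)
Your proof is correct and follows essentially the same approach as the paper's, which simply notes that by the recursive definition~\eqref{eq:rational_example} each $f_i$ requires only a constant number of additional gates on top of the circuit for $f_{i-1}$. Your version is more explicit about the gate count and about why sharing (circuits versus formulae) is what makes the induction work, but the underlying argument is identical.
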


\begin{proof}
The proof of this lemma is by simple induction: just note that due to~\eqref{eq:rational_example} to compute each next $f_n$ from the previous one we need constantly many operations.
\end{proof}

On the other hand, the function $f_n(x)$ has many roots.

\begin{theorem} \label{thm:rational_roots}
The function $f_{n}(x)$ is equal to $1$ in all points of the set  $S_{1,n} = \{\frac{k}{2^{n}} \mid k = 0, 1,\ldots, 2^n\}$ and is equal to $0$ in all points of the set $S_{0,n} = \{ \frac{k}{2^n} + \frac{1}{2^{n+1}} \mid k=0,\ldots, 2^{n}-1\}$. The function is linear between each two consecutive points of $S_{1,n} \cup S_{0,n}$. Thus, $f_{n}(x)$ has $2^{n+1}-1$ roots on the interval $(0,1)$, namely the roots are $(0,1) \cap (S_{1,n} \cup S_{0,n})$.
\end{theorem}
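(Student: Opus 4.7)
The plan is to induct on $n$, tracking the piecewise-linear shape of $f_n$ on $[0,1]$ and using that $f_n = f_0 \circ f_{n-1}$. For the base case $n=0$, I will just unfold $f_0(x) = \max(-2x+1,\, 2x-1)$: it equals $1$ at $x\in\{0,1\}=S_{1,0}$, equals $0$ at $x=1/2=S_{0,0}$, and is linear on $[0,1/2]$ and on $[1/2,1]$, matching the claim verbatim.

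For the inductive step, I assume the statement for $f_{n-1}$. The crucial geometric observation is that $S_{1,n-1}\cup S_{0,n-1}$ consists of equally spaced points on $[0,1]$ at spacing $1/2^n$, with the values of $f_{n-1}$ alternating between $1$ and $0$ along them. Hence on each length-$1/2^n$ subinterval $[a,b]$ between two consecutive such points, $f_{n-1}$ is linear and surjects onto $[0,1]$, so in particular it attains the value $1/2$ at exactly one point, the midpoint $(a+b)/2$. Now $f_0$ is the tent function with $f_0(0)=f_0(1)=1$, $f_0(1/2)=0$, linear on $[0,1/2]$ and $[1/2,1]$, so the composition $f_n = f_0\circ f_{n-1}$ restricted to $[a,b]$ splits into two linear pieces meeting at the midpoint, with $f_n(a)=f_n(b)=1$ and $f_n((a+b)/2)=0$.

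Collecting over all subintervals, the endpoints form the set $\{k/2^n : 0\le k\le 2^n\}=S_{1,n}$, on which $f_n=1$; the midpoints form $\{(2k+1)/2^{n+1} : 0\le k\le 2^n-1\}=S_{0,n}$, on which $f_n=0$; and between consecutive points of $S_{1,n}\cup S_{0,n}$ (which are at spacing $1/2^{n+1}$) the function $f_n$ is linear. This establishes the structural claim. To count roots, I note that because the values at consecutive points of $S_{1,n}\cup S_{0,n}$ alternate between $0$ and $1$, the slopes of $f_n$ on the two sides of any interior point have opposite signs, so every interior point is a genuine non-smoothness point. The count is $|S_{1,n}\cap(0,1)| + |S_{0,n}| = (2^n-1) + 2^n = 2^{n+1}-1$.

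There is no serious obstacle here: the whole argument is a clean bookkeeping induction driven by the tent shape of $f_0$. The only point requiring care is to justify that the apparent corners at the points of $S_{1,n}\cup S_{0,n}$ really are corners and not accidental smooth points, which is exactly why I emphasize the alternation of values $0$ and $1$ in the induction hypothesis, so that opposite-sign slopes on the two sides are guaranteed at every step.
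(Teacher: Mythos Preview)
Your proof is correct and follows essentially the same inductive strategy as the paper. The paper's version computes the two affine branches $g=2f_{n-1}-1$ and $h=-2f_{n-1}+1$ explicitly, observes they take values $\pm 1$ on $S_{1,n-1}$ and $S_{0,n-1}$, and locates their intersection at the midpoints; your version packages the same computation as ``compose with the tent map $f_0$,'' which is exactly equivalent. Your explicit justification that the interior points of $S_{1,n}\cup S_{0,n}$ are genuine non-smoothness points (via the alternation of values forcing opposite-sign slopes) is a detail the paper leaves implicit.
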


\begin{proof}
The proof is by induction on $n$. For $n=0$ the theorem is easy to check directly.

Suppose the statement of the theorem is true for $f_n$ and consider $f_{n+1}$.
Observe that the function $g =2f_{i-1}-1$ is equal to $1$ on $S_{1,n}$, is equal to $-1$ on $S_{0,n}$ and is linear in between of the points $S_{1,n} \cup S_{0,n}$. The function $h = -2f_{i-1}+1$ is symmetrical to $g$: it is equal to $-1$ on $S_{1,n}$, is equal to $1$ on $S_{0,n}$ and is linear in between of the points $S_{1,n} \cup S_{0,n}$.

We have that $f_{n+1} = \max{(g,h)}$ and thus it is equal to $1$ in the points of $S_{1,n} \cup S_{0,n} = S_{1, n+1}$. On each interval between the consecutive points of $S_{1,n} \cup S_{0,n}$ one of two functions $g$ and $h$ goes from the value $-1$ to $1$ and the other goes from $1$ to $-1$. Thus they intersect in the middle of the interval, where both functions are equal to $0$. Thus, we have that $f_{n+1}$ is equal to $0$ in all points $\{ \frac{k}{2^{n+1}} + \frac{1}{2^{n+2}} \mid k=0,\ldots, 2^{n+1}-1\} = S_{0,n+1}$ and is linear on each interval between consecutive points of $S_{1,n+1} \cup S_{0,n+1}$.
\end{proof}

\begin{remark}
Another example of a tropical rational function with a number of roots exponential in the circuit size can be found in~\cite{AllamigeonBGJ18}.
\end{remark}

Now we are ready to prove the main result of this section.

\begin{theorem} \label{thm:tropical_circuits}
There is a sequence of tropical polynomials $r_1(x),\ldots, r_n(x), \ldots$ of one variable such that they are computable by a tropical circuit of size $O(n)$ and on the other hand $r_{n}(x)$ has at least $2^n$ roots.
\end{theorem}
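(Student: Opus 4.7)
The plan is to bootstrap from the rational construction $f_n$ of~\eqref{eq:rational_example}, which already gives $2^{n+1}-1$ roots via a rational tropical circuit of size $O(n)$ (Theorem~\ref{thm:rational_roots}), and convert it into a pair of honest tropical polynomials via the normal form of Lemma~\ref{lem:rational_normal_form}. Concretely, I would first apply Lemma~\ref{lem:rational_to_poly} to the $O(n)$-size rational tropical circuit computing $f_n$. This yields tropical polynomials $p_n$ and $q_n$, each computable by an ordinary (division-free) tropical circuit of size $O(n)$, such that $f_n = p_n \td q_n$.

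Next, I would use the second part of Lemma~\ref{lem:rational_normal_form}: for a univariate tropical rational function written in the form $p \td q$, every non-smoothness point of $p-q$ is a non-smoothness point of either $p$ (when the slope strictly increases) or $q$ (when the slope strictly decreases). Applied to $f_n = p_n \td q_n$, this means every one of the $2^{n+1}-1$ roots of $f_n$ is a root of $p_n$ or a root of $q_n$.

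By pigeonhole, at least one of $p_n$ and $q_n$ has at least $\lceil (2^{n+1}-1)/2\rceil = 2^n$ roots. Define $r_n$ to be whichever of $p_n$ or $q_n$ carries at least $2^n$ roots; then $r_n$ is a tropical polynomial computable by a tropical circuit of size $O(n)$, and it has at least $2^n$ roots, as required.

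I do not expect any serious obstacle: the work has already been done in Theorem~\ref{thm:rational_roots} (exponentially many roots with a small rational circuit) and in Lemmas~\ref{lem:rational_to_poly} and~\ref{lem:rational_normal_form} (translation from rational circuits to division-free circuits, together with the fact that roots of $p \td q$ distribute between $p$ and $q$). The only mildly subtle point is the appeal to the second part of Lemma~\ref{lem:rational_normal_form}, which is the step that ensures that no root of $f_n$ is "lost" in the cancellation $p_n - q_n$; this is exactly why the pigeonhole argument is legitimate.
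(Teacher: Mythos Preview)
Your proposal is correct and follows essentially the same route as the paper: apply Lemma~\ref{lem:rational_to_poly} to the $O(n)$-size rational circuit for $f_n$ to obtain $p_n,q_n$ with $f_n=p_n\td q_n$, then use the second part of Lemma~\ref{lem:rational_normal_form} to distribute the $2^{n+1}-1$ roots of $f_n$ between $p_n$ and $q_n$. The only cosmetic difference is that the paper exploits the explicit zig-zag shape of $f_n$ (local minima and maxima alternate) to say exactly which roots go to $p_n$ and which to $q_n$, and then sets $r_n=q_n$; your pigeonhole shortcut achieves the same conclusion without that bookkeeping.
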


\begin{proof}

Consider the function $f_{n}$. This function is computable by a tropical rational circuit of size $O(n)$. By Lemma~\ref{lem:rational_to_poly} there are two polynomials $p_n$ and $q_n$ such that $f_n = p_n \td q_n$ and $p_n$ and $q_n$ are computable by a tropical circuit of size $O(n)$.

By Theorem~\ref{thm:rational_roots} $f_{n}$ has roots at each point in $\{1/2^{n+1}, 2/2^{n+1}, 3/2^{n+1}, \ldots, (2^{n+1}-1)/2^{n+1}\}$. By Lemma~\ref{lem:rational_normal_form} $q_n$ has roots at each point in $\{1/2^{n+1}, 3/2^{n+1}, 5/2^{n+1}, \ldots, (2^{n+1}-1)/2^{n+1}\}$, while $p_n$ has roots at each point in $\{2/2^{n+1}, 4/2^{n+1}, 6/2^{n+1}, \ldots, (2^{n+1}-2)/2^{n+1}\}$. So, for $r_n$ one can pick $q_n$.
\end{proof}

Recall that by Lemma~\ref{lem:roots_integer} it follows that there is also a sequence of polynomials with the same circuit-size and with the same number of roots, that are all integer (it is enough to substitute constant $1$ in \eqref{eq:rational_example_0}, \eqref{eq:rational_example} by $2^{n+1}$).


\subsection*{Acknowledgements} 


We would like to thank anonymous reviewers for numerous helpful comments.

{\small
\bibliographystyle{abbrv}
\bibliography{bib/tropical}
}

\end{document}